\documentclass[a4paper,10pt]{scrartcl}


\RequirePackage{enumerate}
\RequirePackage{verbatim}

\RequirePackage{amsmath,amsfonts,amssymb}
\RequirePackage[mathcal,mathbf]{euler}

\usepackage{latexsym,amscd,amsthm}
\usepackage[all]{xy}

\usepackage{amstext,amsopn}
\usepackage{graphicx}
\usepackage[dvips]{color}
\usepackage{epsfig}
\usepackage{psfrag}
\usepackage{amscd}
\usepackage{amssymb}
\usepackage{fullpage}
\usepackage{url}
\usepackage{manfnt} 
\usepackage{xcolor}
\usepackage{supertabular}
\usepackage{rotating}
\usepackage{tikz}

\def\dar[#1]{\ar@<2pt>[#1]\ar@<-2pt>[#1]}
\entrymodifiers={!!<0pt,0.7ex>+}


\newtheorem{theorem}{Theorem}[section]
\newtheorem{proposition}[theorem]{Proposition}
\newtheorem{lemma}[theorem]{Lemma}
\newtheorem{corollary}[theorem]{Corollary}      
\newtheorem{Conj}[theorem]{Conjecture}

\theoremstyle{remark}
\newtheorem{remark}{Remark}[section]

\theoremstyle{definition}
\newtheorem{definition}[theorem]{Definition}
\newtheorem{example}[theorem]{Example}
\newtheorem{notation}[theorem]{Notation}


\renewcommand\L{\mathbb{L}}
\newcommand\T{\mathbb{T}}
\newcommand\bfT{\mathbf{T}}
\newcommand\ddR{d_{dR}}
\newcommand\Gm{\mathbb{G}_m}


\author{Damien Calaque}
\title{Shifted cotangent stacks are shifted symplectic}
\date{}

\begin{document}

\maketitle

\begin{abstract}
\noindent\textbf{Abstract.} We prove that shifted cotangent stacks carry a canonical shifted symplectic structure. 
We also prove that shifted conormal stacks carry a canonical Lagrangian structure. 
These results were believed to be true, but no written proof was available in the Artin case. \\
~\\
\noindent\textbf{R\'esum\'e.} On d\'emontre que les champs cotangents d\'ecal\'es sont canoniquement munis d'une structure symplectique d\'ecal\'ee. 
On d\'emontre \'egalement que les champs conormaux d\'ecal\'es sont munis d'une structure Lagrangienne canonique. 
Ces r\'esultats \'etaient attendus mais aucune d\'emonstration n'\'etait disponible dans le cas des champs d'Artin. 
\end{abstract}

\setcounter{tocdepth}{2}
\tableofcontents

\section*{Introduction}

Shifted symplectic and Lagrangian structures have been introduced in the inspiring paper \cite{PTVV}, where many examples of shifted symplectic stacks are given. They appeared to be very powerful tools, allowing for instance: 
\begin{itemize}
\item to prove existence of symplectic structures on various moduli spaces (\cite{PTVV}). 
\item to prove existence and functoriality of perfect symmetric obstruction theories (\cite{PTVV}). 
\item to construct ``classical'' topologial field theories (\cite{Cal,Rune}). 
\end{itemize}

Many results of classical symplectic geometry do extend to the shifted/derived context, sometimes even in a better way thanks to the flexibility of derived geometry. 
For instance, Lagrangian correspondences compose well, without any transversality assumption. 
But some obvious results happen to be much more difficult in the derived context, precisely because of the flexibility of derived geometry. 
The main example of this phenomenon is the classical (and easy) correspondence between symplectic and non-degenerate Poisson structures; in the shifted/derived setting this correspondence is much harder to prove, even in the affine case (see \cite{CPT+,Prid}). 

Another classical and rather easy fact in symplectic geometry is that the cotangent $T^*Y$ to a manifold $Y$ admits a canonical 
symplectic structure and that the conormal $N^*X\subset T^*Y$ of a submanifold $X\subset Y$ is Lagrangian. 
An analog of this was expected to be true for shifted cotangent (and conormal) stacks of derived Artin stacks, but no written proof was available (the Deligne-Mumford case is treated in \cite{PTVV}). 

We provide a proof in this paper. The spirit of the proof is essentially the same as the one in usual symplectic geometry. 
The challenge is to formulate it in a completely coordinate independent fashion. 

We also conclude the paper with several conjectures about the relation between Lagrangian morphisms, (deformed) shifted cotangent stacks, and shifted Poisson structures in the sense of \cite{CPT+,Prid}.  
\subsection*{Basic notation and conventions}

\begin{itemize}
\item $\mathbf{k}$ is a field of characteristic zero (or just a Noetherian $\mathbb{Q}$-algebra). 
\item all algebro-geometric structures are over $\mathbf{k}$. For instance, a derived stack is a derived $\mathbf{k}$-stack. 
\item a \textit{cdga} means a commutative differential graded $\mathbf{k}$-algebra sitting in non-positive degree. 
For commutative differential graded algebras without any degree condition we will say \textit{unbounded cdga}. 
\item An Artin stack means a derived geometric $\mathbf{k}$-stack locally of finite presentation in the sense of \cite{HAG-II}. 
\item as we work with $\infty$-categories, unless otherwise specified all usual categorical terms must be understood $\infty$-categorically (for instance, ``limit'' means ``$\infty$-limit'' or ``homotopy limit''). 
\end{itemize}

\subsection*{Acknowledgments}

I thank Pavel Safronov for several stimulating discussions about shifted cotangent stacks. 
I actually first came up with a different (and quite indirect) strategy for the proof, that uses shifted symplectic groupoids (it will appear in a forthcoming work), 
and it was after a discussion with Pavel during the conference ``Homotopical Methods in Quantum Field Theory'' at IBS-CGP in Pohang that I started to look for a simpler and more direct proof. 

I acknowledge the support of the Institut Universitaire de France and ANR SAT.  

\section{Basics of derived (pre)symplectic geometry}

\subsection{Some $\infty$-categorical linear algebra}

Let $\mathcal C$ be a stable symmetric monoidal $\infty$-category. 
Let $\mathbf{V}$ be a perfect (meaning dualizable) object in $\mathcal C$. 
\begin{definition}\label{def-ndpair}
\textbf{a)}~A degree $n$ pairing $\omega:\mathbf{V}^{\otimes2}\to\mathbf{1}[n]$, where $\mathbf{1}$ is a unit, is \emph{skew-symmetric} if it factors through $\mathbf{V}^{\otimes2}\to\wedge^2\mathbf{V}\to\mathbf{1}[n]$, where 
$\wedge^2\mathbf{V}:=S^2(\mathbf{V}[-1])[2]$. From now on, unless otherwise specified, all pairings will be skew-symmetric. \\
\textbf{b)}~A degree $n$ pairing $\omega:\mathbf{V}^{\otimes2}\to\mathbf{1}[n]$ is \emph{non-degenerate} if the adjoint morphism $\omega^\flat:\mathbf{V}\to\mathbf{V}^\vee[n]$ is an equivalence. 
\end{definition}
\begin{remark}\label{remark-duals}
Recall that there are \textit{a priori} two ways to define the adjoint morphism of $\omega$: 
\begin{itemize}
\item as $\omega^\flat:=(id_{\mathbf{V}}\otimes coev_{\mathbf{V}})\circ(\omega\otimes id_{\mathbf{V}^\vee})$, where $coev_{\mathbf{V}}:\mathbf{1}\to \mathbf{V}\otimes\mathbf{V}^\vee$ is the coevaluation map. 
\item or as $ ^\flat\omega:=(coev_{\mathbf{V}^\vee}\otimes id_{\mathbf{V}})\circ (id_{\mathbf{V}^\vee}\otimes\omega)$. 
\end{itemize}
Since $\omega$ is skew-symmetric then these two definitions coincide. Now observe that the (shifted) dual of $\omega^\flat$ is 
$(coev_{\mathbf{V}^\vee[n]}\otimes id_{\mathbf{V}})\circ (id_{\mathbf{V}^\vee[n]}\otimes\omega^\flat[-n]\otimes id_{\mathbf{V}})\circ(id_{\mathbf{V}^\vee[n]}\otimes ev_{\mathbf{V}})$, 
where $ev_{\mathbf{V}}:\mathbf{V}^\vee\otimes\mathbf{V}\to \mathbf{1}$ is the evaluation map. 
Using that $(coev_{\mathbf{V}}\otimes id_{\mathbf{V}})\circ (id_{\mathbf{V}}\otimes ev_{\mathbf{V}})$ is homotopic to $id_{\mathbf{V}}$ in $Map_{\mathcal C}(\mathbf{V},\mathbf{V})$ we get that 
the (shifted) dual of $\omega^\flat$ is homotopic to $ ^\flat\omega$ (and thus to itself) in $Map_{\mathcal C}(\mathbf{V},\mathbf{V}^\vee[n])$. 
\end{remark}
Let $f:\mathbf{U}\to\mathbf{V}$ be a morphism between perfect objects in $\mathcal C$. 
\begin{definition}\label{def-ndmap}
Let $\omega$ be a degree $n$ pairing on $\mathbf{V}$. \\
\textbf{a)}~An \emph{isotropic structure} on $f$ (w.r.t.~$\omega$) is a homotopy between $0$ and $f^*\omega$ in the space $Map_{\mathcal C}\big(\wedge^2\mathbf{U},\mathbf{1}[n]\big)$ of degree $n$ pairings on $\mathbf{U}$. \\
\textbf{b)}~An isotropic structure on $f$ is \emph{non-degenerate} if the induced null-homotopic sequence $\mathbf{U}\to\mathbf{V}\to\mathbf{U}^\vee[n]$ is a fiber sequence. 
\end{definition}
The following result is very easy, though quite useful. 
\begin{lemma}\label{lemma-nd}
Assume that we are given a morphism $f:\mathbf{U}\to\mathbf{V}$ between perfect objects together with an $n$-shifted pairing $\omega$ on $\mathbf{V}$ and an isotropic structure $\gamma$ on $f$. 
If $\gamma$ is non-degenerate then $\omega$ is non-degenerate as well. 
\end{lemma}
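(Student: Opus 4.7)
The plan is to exhibit $\omega^\flat:\mathbf{V}\to\mathbf{V}^\vee[n]$ as the middle arrow of a morphism between two fiber sequences whose outer terms are connected by identity maps, and then invoke the stable $\infty$-categorical ``five-lemma''. Unwinding definitions, the isotropic structure $\gamma$ is a null-homotopy of $f^*\omega:\wedge^2\mathbf{U}\to\mathbf{1}[n]$, and under the duality adjunction this is the same data as a null-homotopy of the composition
\[
\mathbf{U} \xrightarrow{\; f\;} \mathbf{V} \xrightarrow{\;\omega^\flat\;} \mathbf{V}^\vee[n] \xrightarrow{\;f^\vee[n]\;} \mathbf{U}^\vee[n].
\]
Set $g := f^\vee[n]\circ\omega^\flat$. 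By hypothesis, $\gamma$ exhibits $\mathbf{U}\xrightarrow{f}\mathbf{V}\xrightarrow{g}\mathbf{U}^\vee[n]$ as a fiber sequence in $\mathcal C$.

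Since the contravariant functor $(-)^\vee[n]$ is exact on perfect objects, applying it to this fiber sequence yields another fiber sequence
\[
\mathbf{U} \xrightarrow{\;g^\vee[n]\;} \mathbf{V}^\vee[n] \xrightarrow{\;f^\vee[n]\;} \mathbf{U}^\vee[n],
\]
after the canonical identifications $\mathbf{U}^{\vee\vee}\simeq\mathbf{U}$ and $(\mathbf{U}^\vee[n])^\vee[n]\simeq \mathbf{U}$. I now identify $g^\vee[n]$: from $g=f^\vee[n]\circ\omega^\flat$ one computes $g^\vee[n]\simeq (\omega^\flat)^\vee[n]\circ f$, and Remark \ref{remark-duals} (i.e.\ skew-symmetry of $\omega$) supplies a homotopy $(\omega^\flat)^\vee[n]\simeq \omega^\flat$. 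Hence $g^\vee[n]\simeq \omega^\flat\circ f$.

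Combined with the tautological factorization $g=f^\vee[n]\circ\omega^\flat$, these identifications assemble $\omega^\flat$ into a morphism of fiber sequences with $\mathrm{id}_{\mathbf{U}}$ on the left and $\mathrm{id}_{\mathbf{U}^\vee[n]}$ on the right. In the stable $\infty$-category $\mathcal C$ this forces the middle arrow $\omega^\flat:\mathbf{V}\to\mathbf{V}^\vee[n]$ to be an equivalence, which is exactly the non-degeneracy of $\omega$. The only genuinely non-formal point, and what I expect to be the main obstacle in a fully rigorous write-up, is the coherence bookkeeping: promoting the homotopy $g^\vee[n]\simeq \omega^\flat\circ f$ obtained from Remark \ref{remark-duals} to a commutative square that is coherently compatible with the null-homotopies exhibiting both rows as fiber sequences, so that one really obtains an honest morphism of fiber sequences to which the stable five-lemma applies.
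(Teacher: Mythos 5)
Your proof is correct and follows essentially the same route as the paper: dualize the fiber sequence $\mathbf{U}\to\mathbf{V}\to\mathbf{U}^\vee[n]$, use the skew-symmetry statement of Remark \ref{remark-duals} to identify the dual sequence's first map with $\omega^\flat\circ f$, and conclude that $\omega^\flat$ is an equivalence as the middle arrow of a map of fiber sequences with identities on the outer terms. Your write-up is somewhat more explicit than the paper's (which leaves the coherence of the comparison square implicit), but there is no substantive difference.
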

\begin{proof}
By definition, the null-homotopic sequence $\mathbf{U}\to\mathbf{V}\to\mathbf{U}^\vee[n]$ is a fiber sequence. 
Its (shifted) dual $\mathbf{U}\to\mathbf{V}^\vee[n]\to\mathbf{U}^\vee[n]$ is thus a fiber sequence as well. 
The main point is that, thanks to Remark \ref{remark-duals}, these two sequences are mapped to each other in the following way: 
$$
\xymatrix{
\mathbf{U} \ar[r]\ar@{=}[d] & \mathbf{V}   \ar[d]^{\omega^\flat}\ar[r] & \mathbf{U}^\vee[n] \ar@{=}[d] \\
\mathbf{U}       \ar[r] & \mathbf{V}^\vee[n] \ar[r] & \mathbf{U}^\vee[n]
}
$$
Hence the morphism $\omega^\flat:\mathbf{V}\to \mathbf{V}^\vee[n]$ is an equivalence and $\omega$ is non-degenerate. 
\end{proof}

\subsection{Shifted (pre)symplectic, isotropic and Lagrangian structures}\label{sec-presymp}

Recall from \cite{PTVV} that to any derived stack $X$ one can associate a graded mixed ($\mathbf{k}$-)module $\mathbf{DR}(X)$. 
We will often denote $d_{dR}$ the mixed differential $\epsilon$ (of weight $1$ and cohomological degree $1$)\footnote{We use the weight/grading convention from \cite{CPT+} rather than the one of \cite{PTVV}). }, 
and $d_{int}$ or $d_X$ the differential of the underlying graded module $\mathbf{DR}(X)^\sharp$ (of weight $0$ and cohomological degree $1$), also called the \textit{internal differential}. 

If $X=\mathrm{Spec}(A)$, for a cofibrant cdga $A$, then $\mathbf{DR}(X)=\mathbf{DR}(A/\mathbf{k})$ is the graded module 
$$
Sym_A\big(\Omega^1_A[-1](-1)\big)\,.
$$
equipped with the following mixed differential: 
$$
\epsilon=d_{dR}:a_0(da_1)\dots (da_n)\mapsto (da_0)(da_1)\dots (da_n)\,.
$$
Note that $(da_i)$ is \textit{a priori} not ``$d$ applied to $a_i$ and is in particular \textbf{not} $d_{int}a_i=d_Aa_i$. 
But observe that $(da_i)=d_{dR}a_i$; hence we will try to use the notation $d_{dR}a_i$ in order to avoid confusion. 
\begin{notation}
We write $\epsilon-\mathbf{dg}_{\mathbf{k}}^{gr}$, resp.~$\mathbf{dg}_{\mathbf{k}}^{gr}$, for the category of graded mixed modules $\mathbf{k}$-modules, resp.~graded $\mathbf{k}$-modules. 
We refer to \cite[Section 1]{CPT+} for details about the homotopy theory of graded mixed modules. 
\end{notation}
\begin{definition}
\textbf{a)}~The space of \emph{$p$-forms of degree $n$ on $X$} is 
$$
A^p(X,n):=Map_{\mathbf{dg}_{\mathbf{k}}^{gr}}\big(\mathbf{k},\mathbf{DR}(X)^\sharp[n+p](p)\big)\,.
$$ 
\textbf{b)}~The space of \emph{closed $p$-forms of degree $n$ on $X$} is 
$$
A^{p,cl}(X,n):=Map_{\epsilon-\mathbf{dg}_{\mathbf{k}}^{gr}}\big(\mathbf{k},\mathbf{DR}(X)[n+p](p)\big)\,.
$$ 
\textbf{c)}~For a closed $p$-form of $\alpha$ degree $n$, we denote by $\alpha_0$ its \emph{underlying $p$-form of degree $n$}, 
that is to say its image through the forgetful functor $(-)^\sharp$. 
\end{definition}
The functors $A^p(n):=A^p(-,n)$ and $A^{p,cl}(n):=A^{p,cl}(-,n)$ are actually derived stacks (see \cite[Proposition 1.11]{PTVV}). 
\begin{remark}
Note that for a given graded mixed module $M$, $Map_{\epsilon-\mathbf{dg}_{\mathbf{k}}^{gr}}(\mathbf{k},M)$ is the space of a semi-infinite sequences 
$$
\alpha=(\alpha_0,\dots,\alpha_i,\cdots)
$$
such that $d(\alpha_0)=0$, and for every $i\geq0$, $\epsilon(\alpha_i)=d(\alpha_{i+1})$. 
I.e.~it is the space of $0$-cocycles in $\prod_{k\geq0}M^{(p)}$ equipped with the total differential $d+\epsilon$. 
The image of such a sequence $\alpha$ through the forgetful functor $(-)^\sharp$ is precisely its leading term $\alpha_0$, which is a $0$-cocyle in $M^{(0)}$. 
We will thus sometimes write, in the context of the above definition, ``\emph{the leading term}'' in place of ``the underlying $p$-form of degree $n$'' of a closed $p$-form of degree $n$. 
\end{remark}
\begin{definition}
\textbf{a)}~An \emph{$n$-shifted presymplectic structure} on a stack $X$ is a closed $2$-form of degree $n$ on $X$. \\
\textbf{b)}~Let $(X,\omega)$ be a stack equipped with an $n$-shifted presymplectic structure. 
An \emph{isotropic structure} on a morphism $f:L\to X$ is a path from $0$ to $f^*\omega$ in the space $A^{2,cl}(L,n)$ of $n$-shifted presymplectic structures on $L$. \\
\textbf{c)}~We say that an $n$-shifted presymplectic structure $\omega$ on an $X$ is \emph{symplectic} if $X$ has a perfect cotangent complex $\mathbb{L}_X$ and if the degree $n$ pairing on $\mathbb{T}_X:=\mathbb{L}_X^\vee$ 
given by its leading term $\omega_0$ is non-degenerate in the sense of Definition \ref{def-ndpair}. \\
\textbf{d)}~We say that an isotropic structure $\gamma$ on $f:L\to X$ is \emph{non-degenerate} if both $X$ and $L$ have perfect cotangent complexes and if the leading term $\gamma_0$ is indeed a non-degenerate isotropic structure 
on the morphism $\mathbb{T}_L\to f^*\mathbb{T}_X$ in the sense of Definition \ref{def-ndmap}. It is only when the presymplectic structure $\omega$ itself is symplectic that we name \emph{Lagrangian structures} the 
non-degenerate isotropic ones. 
\end{definition}

The main result of Section \ref{sec-symplcotan} below is that the $n$-shifted cotangent stack $\bfT^*[n]X$ of an Artin stack $X$ carries an $n$-shifted symplectic structure and that the zero section morphism $X\to\bfT^*[n]X$ carries 
a Lagrangian structure. 

\medskip

We refer to \cite{PTVV,Cal,Saf} for many other examples of $n$-shifted symplectic and Lagrangian structures. 

\begin{remark}
\textbf{a)}~A stack equipped an $n$-shifted presymplectic structure (an \textit{$n$-shifted presymplectic stack}, for short) is nothing but a stack over $A^{2,cl}(n)$. \\
\textbf{b)}~ Let $(X,\omega)$ be a stack equipped with an $n$-shifted presymplectic structure. 
An \emph{isotropic structure} on a morphism $f:L\to X$ is nothing but the data of a commuting square 
$$
\xymatrix{
L \ar[r]^{f} \ar[d] & X \ar[d]^{\omega} \\
\mathrm{*} \ar[r]^{\!\!\!\!\!\!\!\!\!\!\!\!0} & A^{2,\mathrm{cl}}(n)
}
$$
\end{remark}

The above alternative approach to isotropic structures on morphisms (due to Rune Haugseng \cite[Section 9]{Rune}) leads to the following: 
\begin{definition}
An \textit{$n$-isotropic correspondence} is a correspondence in the $\infty$-category $\mathbf{dSt}_{/A^{2,cl}(n)}$ of derived stacks over $A^{2,cl}(n)$: it is the data of a commuting square 
$$
\xymatrix{
L \ar[r]^{g} \ar[d]_{f} & Y \ar[d]^{\omega_Y} \\
X \ar[r]_{\!\!\!\!\!\!\!\!\!\!\!\!\omega_X} & A^{2,\mathrm{cl}}(n)
}
$$ 
\end{definition}
In other words, an $n$-isotropic correspondence is the data of two $n$-shifted presymplectic stacks $(X,\omega_X)$ and $(Y,\omega_Y)$ together with an isotropic morphism $f\times g:L\to \overline{X}\times Y$, 
where $\overline{X}\times Y$ is the $n$-shifted presymplectic stack $(X\times Y,\pi_2^*\omega_Y-\pi_1\omega_X)$. Assuming $\omega_X$ and $\omega_Y$ are symplectic, then $f\times g$ is Lagrangian if and only if the 
commuting square 
$$
\xymatrix{
\mathbb T_L \ar[r] \ar[d] & g^*\mathbb T_Y\simeq g^*\mathbb L_Y[n] \ar[d] \\
g^*\mathbb T_X\simeq f^*\mathbb{L}_X[n] \ar[r] & \mathbb L_L[n]
}
$$
is (co)Cartesian. In this case we talk about an \textit{$n$-Lagrangian correspondence}. 

\subsection{Isotropic and Lagrangian fibrations}

Recall from \cite[Section 1]{CPT+} that to any morphism $f:X\to Y$ of derived stacks one can associate a graded mixed module $\mathbf{DR}(f):=\mathbf{DR}(X/Y)$. 
If $f:X=\mathrm{Spec}(A)\to \mathrm{Spec}(B)=Y$ comes from a cofibration $B\to A$ between cofibrant cdgas, then $\mathbf{DR}(X/Y):=\mathbf{DR}(A/B)$ is the graded module 
$$
Sym_A\big(\Omega^1_{A/B}[-1](-1)\big)\,.
$$
equipped with the mixed differential: 
$$
\epsilon:a_0(da_1)\dots (da_n)\mapsto (da_0)(da_1)\dots (da_n)\,.
$$
\begin{definition}
\textbf{a)}~The space of \emph{relative $p$-forms of degree $n$ on $f$} (a-k-a \emph{$p$-forms of degree $n$ on $X$ relative to $Y$}) is 
$$
A^p(f,n)=A^p(X/Y,n):=Map_{\mathbf{dg}_{\mathbf{k}}^{gr}}\big(\mathbf{k},\mathbf{DR}(X/Y)^\sharp[n+p](p)\big)\,.
$$ 
\textbf{b)}~The space of \emph{relative closed $p$-forms of degree $n$ on $f$} is 
$$
A^{p,cl}(f,n)=A^{p,cl}(X/Y,n):=Map_{\epsilon-\mathbf{dg}_{\mathbf{k}}^{gr}}\big(\mathbf{k},\mathbf{DR}(X/Y)[n+p](p)\big)\,.
$$ 
\textbf{c)}~For a relative closed $p$-form of $\alpha$ degree $n$, we denote by $\alpha_0$ its \emph{underlying relative $p$-form of degree $n$}, 
that is to say its image through the forgetful functor $(-)^\sharp$. 
\end{definition}
Note that we have a null-homotopic sequence 
$$
\mathbf{DR}(Y)\overset{f^*}{\longrightarrow}\mathbf{DR}(X)\overset{~_{/Y}}{\longrightarrow} \mathbf{DR}(X/Y)\,.
$$
\begin{remark}\label{rem-sheafDR}
Recall from \cite[Remark 2.4.8]{CPT+} that $\mathbf{DR}(X/Y)$ can be obtained as global sections of a stack $\mathcal{DR}(f)=\mathcal{DR}(X/Y)$ of graded mixed modules 
over the $\infty$-site of derived affine schemes over $Y$: 
$$
\mathcal{DR}(X/Y)\big(\mathrm{Spec}(A)\overset{y}{\longrightarrow} Y\big):=\mathbf{DR}\big(y^*X/\mathrm{Spec}(A)\big)\,.
$$
Observe that $\mathcal{DR}(X/Y)$ is actually a graded mixed $\mathcal O_Y$-module as every $\mathbf{DR}\big(y^*X/\mathrm{Spec}(A)\big)$ is a graded mixed $A$-module. 
\end{remark}
Let now $X$ be equipped with an $n$-shifted presymplectic structure $\omega$. 
\begin{definition}
\textbf{a)}~An \emph{isotropic fibration structure on $f$} is a path from $0$ to $\omega_{/Y}$ in the space $A^{2,cl}(X/Y,n)$ of relative closed $2$-forms of degree $n$. \\
\textbf{b)}~An isotropic fibration structure $\gamma$ on $f$ is \emph{non-degenerate} (or is a \emph{Lagrangian fibration structure}) if both $X$ and $Y$ have perfect cotangent complexes 
and if the leading term $\gamma_0$ is indeed a non-degenerate isotropic structure on the morphism $\mathbb{T}_{X/Y}\to \mathbb{T}_X$ in the sense of Definition \ref{def-ndmap}. 
\end{definition}
\begin{remark}
It follows from Lemma \ref{lemma-nd} that if $\gamma$ is non-degenerate then the $n$-shifted presymplectic structure $\omega$ is indeed $n$-shifted symplectic. 
Therefore we don't need here to make a distinction between non-degenerate isotropic fibration structures and Lagrangian fibration structures. 
\end{remark}
We will see in Section \ref{sec-symplcotan} that the projection $\bfT^*[n]X\to X$ of a shifted cotangent stack (of an Artin stack) comes equipped with a Lagrangian fibration structure. 

\section{The $n$-shifted symplectic structure of the $n$-shifted cotangent stack}\label{sec-symplcotan}

Let $X$ be an Artin stack. 
It has a perfect global \textit{cotangent complex} $\L_X$ (see \cite{HAG-II}), its dual $\T_X:=\L_X^\vee$ being called the \textit{tangent complex}. 
Our main object of interest is its $n$-shifted cotangent stack $\bfT^*[n]X:=\mathbb{R}\mathrm{Spec}_X\big(\mathrm{Sym}(\T_X[-n])\big)$, that is nothing but the stack of $n$-shifted $1$-forms on $X$. 
Note that, given a quasi-coherent module $E$ over $X$, the stack of sections $\mathbf{E}:=\mathbb{A}_X(E)$ is defined, as a stack over $X$, by 
$$
\mathbf{E}\big(\mathrm{Spec}(A)\overset{x}{\to} X\big):=Map_{A-mod}(A,x^*E)\,.
$$
Whenever $E$ is perfect, we get that this is equivalent to 
$$
Map_{A-mod}(x^*E^\vee,A)\cong Map_{A-alg}\big(\mathrm{Sym}_A(x^*E^\vee),A\big)=:\mathbb{R}\mathrm{Spec}_X\big(\mathrm{Sym}(E^\vee)\big)\big(\mathrm{Spec}(A)\overset{x}{\to} X\big)\,,
$$
and one can show that $\mathbb{R}\mathrm{Spec}_X\big(\mathrm{Sym}(E^\vee)\big)$ is an Artin stack as well (it is an affine stack of finite presentation over $X$; see \cite{To} for what we mean by an \textit{affine stack}). 

\medskip

In this Section we prove that the $n$-shifted cotangent stack $\bfT^*[n]X$ is $n$-shifted symplectic, together with variations on this result. 
All this was mainly already known for Deligne-Mumford stacks. 

\subsection{An $n$-shifted presymplectic structure on $\bfT^*[n]X$}

Recall that the space of morphisms $Y\to \bfT^*[n]X$ is nothing but the space of morphisms $f:Y\to X$ equipped with a section of $f^*\L_X[n]$. 
In particular the identity map $\bfT^*[n]X\to \bfT^*[n]X$ corresponds to the data of the projection map $\pi_X:\bfT^*[n]X\to X$ together with a section of $\pi_X^*\L_X[n]$. 
Therefore, using the map $\pi_X^*\L_X[n]\to\L_{\bfT^*[n]X}[n]$ we obtain that $\bfT^*[n]X$ carries a tautological $n$-shifted $1$-form $\lambda_X$. 
The morphism $\ddR:A^1(-,n)\to A^{2,cl}(-,n)$ sends it to an $n$-shifted closed $2$-form $\omega_X=\ddR\lambda_X$ on $\bfT^*[n]X$. 

\medskip

The zero section morphism $\iota_X:X\to \bfT^*[n]X$ corresponds to the data of the identity map $X\to X$ together with the zero section of $\L_X[n]$. 
Tautologically, this means that the pull-back of $\lambda_X$ along the zero section is homotopic to zero. 
Hence we obtain an isotropic structure $\gamma_X$ on $\iota_X$ (w.r.t.~the $n$-shifted presymplectic structure previously described). 

\medskip

Finally observe $\lambda_X$ comes from a section of $\pi_X^*\L_X[n]$ and recall that the sequence 
$$
\pi_X^*\L_X\to \L_{\bfT^*[n]X}\to \L_{\pi_X}
$$
is fibered. 
Therefore the image of $\lambda_X$ into relative $n$-shifted $1$-forms along $\pi_X$ is homotopic to zero. 
Hence we obtain that the projection morphism $\pi_X$ is equipped with an isotropic fibration structure $\eta_X$. 
\begin{remark}\label{rem-identification}
Note that $\L_{\pi_X}$ is nothing but $\pi_X^*\T_X[-n]$. 
\end{remark}

\subsubsection{Compatibility with the $\Gm$-action}

Observe that, for a perfect module $E$ over $X$, its stack of sections $\mathbf{E}$ is acted on by $\Gm$ because $\mathrm{Sym}(E^\vee)$ is a graded $\mathcal O_X$-algebra (by symmetric powers). 
We will refer to this new grading as the \textit{fiber grading} (not to confuse it with the weight of forms and closed forms). 
Both the zero section $X\to \mathbf{E}$ and the projection $\mathbf{E}\to X$ are $\Gm$-equivariant for the trivial $\Gm$-action on $X$. 

\begin{notation}\label{notation}
\textbf{a)}~If $Y$ is an Artin stack equipped with an action of $\Gm$, then the relative cotangent complex $\L_{[Y/\Gm]/B\Gm}$ is a quasi-coherent sheaf on $[Y/\Gm]$. 
Its pull-back along the quotient map $Y\to [Y/\Gm]$ identifies with $\L_Y$. 
We will therefore allow ourselves to abuse notation and to keep writing $\L_Y$ for its $\Gm$-equivariant enhancement $\L_{[Y/\Gm]/B\Gm}$. 
Similarly, if $f:Y_1\to Y_2$ is a $\Gm$-equivariant map, then we will keep writing $\L_f$ for its $\Gm$-equivariant enhancement $\L_{[f]}$, 
where $[f]:[Y_1/\Gm]\to [Y_2/\Gm]$ is the induced map on quotient stacks. \\
\textbf{b)}~Sheaves on $B\Gm=[*/\Gm]$ are identified with graded complexes. 
For a graded complex $M=(M_i)_{i\in\mathbb{Z}}$ we denote by $M\{k\}$ its $k$-th shift for this grading (meaning that $M\{k\}_i=M_{i+k}$). 
Without further precision, a genuine complex is always assumed to be equipped with the trivial $\Gm$-action. 
This notation extends as well to sheaves on $[X/\Gm]=X\times B\Gm$, that are nothing but graded sheaves on $X$. 
\end{notation}
\begin{remark}\label{rem-gridentification}
With this notation the identification from Remark \ref{rem-identification} becomes $\L_{\pi_X}\cong\pi_X^*\T_X[-n]\{-1\}$. 
\end{remark}

Now, if $Y$ is a derived stack equipped with an action of $\Gm$, observe that a $\Gm$-equivariant map $Y\to\mathbf{E}$ is determined by 
the data of a $\Gm$-equivariant map $f:Y\to X$ together with a $\Gm$-equivariant section of $f^*E\{1\}$. 
Applying this to the identity map $\bfT^*[n]X\to \bfT^*[n]X$ we get a $\Gm$-equivariant section of $\pi_X^*\L_X\{1\}$. 
As a consequence, the $1$-form $\lambda_X\in A^1\big(\bfT^*[n]X,n\big)\cong \Gamma\big(\bfT^*[n]X,\L_{\bfT^*[n]X}[n]\big)$ of (cohomological) degree $n$ 
is homogeneous of weight $1$ for the fiber grading. 

\medskip

Recall from Remark \ref{rem-sheafDR} that for a stack $Y$ equipped with a $\Gm$-action we have a sheaf $\mathcal{DR}\big([Y/\Gm]/B\Gm\big)$ of graded mixed $\mathcal O$-modules on $B\Gm$. 
Its pull-back along the canonical point in $B\Gm$ is $\mathbf{DR}(Y)$. This means that $\mathbf{DR}(Y)$ carries yet another auxiliary grading that enhances it to a graded mixed graded $\mathbf{k}$-module. 
We will follow the (abusing) Notation \ref{notation} and keep writing $\mathbf{DR}(Y)$ for its graded enhancement $\mathcal{DR}\big([Y/\Gm]/B\Gm\big)$. 
Similarly, if $f:Y_1\to Y_2$ is a $\Gm$-equivariant map, then $\mathbf{DR}(f)$ also admits $\Gm$-equivariant enhancement that we still denote the same. \\

\medskip

Going back to $\bfT^*[n]X$ we get that $\lambda_X$ belongs to the space 
$$
A^{1,\{1\}}\big(\bfT^*[n]X,n\big):=Map_{(\mathbf{dg}_{\mathbf{k}}^{gr})^{gr}}\big(\mathbf{k},\mathbf{DR}(\bfT^*[n]X)^\sharp[n+1](1)\{1\}\big)
$$
and that $\omega_X=\ddR\lambda_X$ belongs to the space 
$$
A^{2,cl,\{1\}}\big(\bfT^*[n]X,n\big):=Map_{(\epsilon-\mathbf{dg}_{\mathbf{k}}^{gr})^{gr}}\big(\mathbf{k},\mathbf{DR}(\bfT^*[n]X)[n+2](2)\{1\}\big)
$$
of closed $2$-forms of (cohomological) degree $n$ on $\bfT^*[n]X$ that are homogeneous of weight $1$ for the fiber grading. 
Similarly, $\eta_X$ is a path in the space 
$$
A^{2,cl,\{1\}}\big(\pi_X,n\big):=Map_{(\epsilon-\mathbf{dg}_{\mathbf{k}}^{gr})^{gr}}\big(\mathbf{k},\mathbf{DR}(\pi_X)[n+2](2)\{1\}\big)
$$
of relative closed $2$-forms of (cohomological) degree $n$ along $\pi_X$ that are homogeneous of weight $1$ for the fiber grading.

\subsection{Non-degeneracy of the $n$-shifted presymplectic structure}

We will now prove that $\omega_X$ actually defines an $n$-shifted symplectic structure on $\bfT^*[n]X$, that is to say the morphism 
$$
\T_{\bfT^*[n]X}\longrightarrow \L_{\bfT^*[n]X}[n]
$$
induced by its underlying $n$-shifted $2$-form is an equivalence. 

\medskip 

We also prove that the isotropic morphism $X\to\bfT^*[n]X$ and the isotropic fibration $\bfT^*[n]X\to X$ are Lagrangian. 
\begin{theorem}\label{theorem-cotangent}
The $n$-shifted cotangent stack $\bfT^*[n]X$ is $n$-shifted symplectic, its zero section is Lagrangian, and the projection down to $X$ is a Lagrangian fibration. 
More precisely: 
\begin{enumerate}
\item The $n$-shifted presymplectic structure $\omega_X$ on $\bfT^*[n]X$ is non-degenerate. 
\item The $n$-shifted isotropic structure $\gamma_X$ on the zero section $\iota_X:X\to\bfT^*[n]X$ is non-degenerate. 
\item The $n$-shifted isotropic fibration structure $\eta_X$ on $\pi_X:\bfT^*[n]\to X$ is non-degenerate. 
\end{enumerate}
\end{theorem}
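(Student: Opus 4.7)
The plan is to establish parts (2) and (3) directly, and deduce (1) from either of them via Lemma \ref{lemma-nd}. In both cases, the canonical tangent fiber sequence of $\pi_X$ will serve as the expected non-degeneracy sequence. Recall from Remark \ref{rem-gridentification} that $\L_{\pi_X}\simeq \pi_X^*\T_X[-n]\{-1\}$, and dually $\T_{\pi_X}\simeq \pi_X^*\L_X[n]\{1\}$, so the outer terms of the sequences appearing in the definitions of non-degeneracy already have the correct form. All that must be checked is that the leading terms $\omega_{X,0}$, $\gamma_{X,0}$ and $\eta_{X,0}$ produce the maps of this tangent fiber sequence as their adjoints.

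For part (3), I will identify the null-homotopic sequence
$$
\T_{\pi_X}\longrightarrow \T_{\bfT^*[n]X}\longrightarrow \L_{\pi_X}[n]\simeq \pi_X^*\T_X\{-1\}
$$
induced by $\eta_{X,0}$ with the tangent fiber sequence $\T_{\pi_X}\to \T_{\bfT^*[n]X}\to \pi_X^*\T_X$. The weight-$\{1\}$ homogeneity of $\omega_X$ forces the adjoint $\omega_{X,0}^\flat\colon \T_{\bfT^*[n]X}\to \L_{\bfT^*[n]X}[n]$ to send the weight-$\{-1\}$ piece $\T_{\pi_X}$ into the weight-$\{0\}$ piece $\pi_X^*\L_X[n]$, and the weight-$\{0\}$ piece $\pi_X^*\T_X$ into the weight-$\{1\}$ piece $\L_{\pi_X}[n]$. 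Only the second component contributes to the composite $\T_{\bfT^*[n]X}\to\L_{\pi_X}[n]$ that is to be tested, and tracing through the definition of $\lambda_X$ as the tautological $1$-form identifies it, up to a weight shift and a sign, with the identity on $\pi_X^*\T_X$.

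For part (2), I will pull the tangent fiber sequence back along $\iota_X$. Since $\iota_X$ is a section of $\pi_X$, the pulled-back sequence splits, giving
$$
\iota_X^*\T_{\bfT^*[n]X}\simeq \T_X\oplus \L_X[n]\{1\}.
$$
The same weight argument, applied to $\iota_X^*\omega_{X,0}$ together with the homotopy provided by $\gamma_{X,0}$, will identify the null-homotopic sequence $\T_X\to\iota_X^*\T_{\bfT^*[n]X}\to\L_X[n]$ with the inclusion of the first summand followed by the tautological projection from the second; this is manifestly a fiber sequence, so $\gamma_X$ is Lagrangian, and Lemma \ref{lemma-nd} then delivers (1).

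The main difficulty is nothing conceptual but careful bookkeeping: one must track weight shifts and signs, and verify that $\omega_{X,0}^\flat$ takes the predicted form on each weight summand. The $\Gm$-equivariant enhancement developed in the previous subsection is the decisive tool here: it leaves so little freedom for $\omega_{X,0}^\flat$ that the tautological nature of $\lambda_X$ essentially forces it to be as claimed.
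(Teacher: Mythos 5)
Your overall architecture (prove (2) and (3), deduce (1) from Lemma \ref{lemma-nd}) matches the paper, and your identification of the relevant fiber sequences is correct. But there is a genuine gap at the decisive step. The $\Gm$-weight argument does less than you claim: $\T_{\bfT^*[n]X}$ does not decompose into weight summands over all of $\bfT^*[n]X$ (the splitting only exists after pullback along $\iota_X$); what equivariance actually gives is that $\omega_{X,0}^\flat$ respects the two-step filtration, i.e.\ the ``off-diagonal'' composite $\T_{\pi_X}\to\L_{\pi_X}[n]$ vanishes because it is a $\Gm$-equivariant map between pullbacks of pure-weight sheaves of different weights. This is exactly the content of Lemma \ref{lem-psi} in the paper. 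What equivariance cannot do is constrain the induced ``diagonal'' endomorphisms $\phi_X\colon\T_X\to\T_X$ and $\psi_X\colon\L_X\to\L_X$: these live entirely in weight zero, so the weight grading leaves them completely free. Showing that they are equivalences is the whole difficulty, and your proposal disposes of it with the phrase ``tracing through the definition of $\lambda_X$ \ldots identifies it, up to a weight shift and a sign, with the identity.'' For a general Artin stack there is no coordinate model of $\L_{\bfT^*[n]X}$ in which such a trace can be performed; this is precisely the obstruction the paper is written to overcome.

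The paper closes this gap by a two-step argument you would need to supply: first, the induced endomorphisms $\phi_X$ and $\psi_X$ are natural in $X$ (Lemmas \ref{lemma-phi} and \ref{lem-naturalpsi}, which rest on the compatibility of the tautological forms $\lambda_X$ and $\lambda_Y$ under pullback through $f^*\bfT^*[n]Y$); second, an induction on the degree of geometricity, presenting an $(m+1)$-Artin stack as the colimit of a smooth Segal groupoid $G_*$ in $m$-Artin stacks, computing $q^*\T_Y$ as $\mathrm{colim}_n(e_n^*\T_{G_n})$, and using conservativity of $q^*$ to reduce to the affine case --- where, and only where, the tautological computation you invoke can actually be carried out. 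Without this descent to affines (or an equivalent device), your argument establishes only that $\omega_{X,0}^\flat$ is ``upper triangular'' with invertibility of the diagonal left unproved.
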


Before we start it is important to have in mind the two following fiber sequences, that are ($n$-shifted) dual to each other: 
$$
\pi_X^*\L_X[n]\cong \T_{\pi_X}\to \T_{\bfT^*[n]X}\to \pi_X^*\T_X \qquad \textrm{and} \qquad \pi_X^*\L_X[n]\to \L_{\bfT^*[n]X}[n]\to \L_{\pi_X}[n]\cong \pi_X^*\T_X\,.
$$
Moreover, when pulled-back along the zero section these sequences split and we get that 
$$
\iota_X^*\T_{\bfT^*[n]X}\cong\L_X[n]\oplus\T_X \qquad \textrm{and} \qquad \iota_X^*\L_{\bfT^*[n]X}[n]\cong\L_X[n]\oplus\T_X\,.
$$
\begin{remark}
The above fiber sequences actually are $\Gm$-equivariant. More precisely, according to Remark \ref{rem-gridentification} we thus get fiber sequences 
$$
\pi_X^*\L_X[n]\{1\}\to \T_{\bfT^*[n]X}\to \pi_X^*\T_X \qquad \textrm{and} \qquad \pi_X^*\L_X[n]\to \L_{\bfT^*[n]X}[n]\to \L_{\pi_X}[n]\cong \pi_X^*\T_X\{-1\}
$$
of $\Gm$-equivariant sheaves on $\bfT^*[n]X$ that split when pulled-back along the zero section: 
$$
\iota_X^*\T_{\bfT^*[n]X}\cong\L_X[n]\{1\}\oplus\T_X \qquad \textrm{and} \qquad \iota_X^*\L_{\bfT^*[n]X}[n]\cong\L_X[n]\oplus\T_X\{-1\}\,.
$$
\end{remark}

\subsubsection{The isotropic structure on the zero section $\iota_X$ is non-degenerate}\label{subsubiotand}

We prove here the second part of Theorem \ref{theorem-cotangent}	. 

\medskip

We have an isotropic structure $\gamma_X$ on the zero section morphism $\iota_X:X\to T^*[n]X$, inducing in particular a commuting diagram 
$$
\xymatrix{
\T_X             \ar[d]\ar[r] & \T_X                \ar[d]\ar[r] & 0 \ar[d] \\
\iota_X^*\T_{\bfT^*[n]X} \ar[r] & \iota_X^*\L_{\bfT^*[n]X}[n] \ar[r] & \L_X[n] 
}
$$
in which the right-hand square is Cartesian (this square is the equivalence $\iota_X^*\L_{\bfT^*[n]X}[n]\cong\L_X[n]\oplus\T_X$). 
The arrow $\T_X\to \T_X$ in the left-most square is determined by the leading term of $\gamma_X$ (which is a homotopy between the composed map $T_X\to\L_X(n]$ and the zero map). 

We will now show that 
\begin{lemma}\label{lemma-phi}
The morphism $\phi_X:\T_X\to \T_X$ appearing in the above diagram is natural in $X$, in the sense that for every morphism $f:X\to Y$ we get a commuting square
$$
\xymatrix{
\T_X \ar[d]\ar[r]^{\phi_X}    & \T_X    \ar[d] \\
f^*\T_Y    \ar[r]^{f^*\phi_Y} & f^*\T_Y 
}
$$
\end{lemma}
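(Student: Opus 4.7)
The plan is to derive the naturality of $\phi_X$ from the functoriality of the entire construction that produces it. The key difficulty is that there is no morphism $\bfT^*[n]X \to \bfT^*[n]Y$ attached to $f \colon X \to Y$; instead, one has only a correspondence
\[
\bfT^*[n]X \longleftarrow f^*\bfT^*[n]Y \longrightarrow \bfT^*[n]Y,
\]
in which the leftward map is induced by the canonical morphism $f^*\L_Y \to \L_X$ (contravariance of cotangent), while the rightward map is the natural projection. Both maps respect the zero sections in the sense that the zero section of $f^*\bfT^*[n]Y$ as an $X$-stack recovers $\iota_X$ after composition with the leftward map and recovers $\iota_Y \circ f$ after composition with the rightward map.

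Using this correspondence, I would first establish that the tautological $1$-forms are compatible: the pullback of $\lambda_X$ along $f^*\bfT^*[n]Y \to \bfT^*[n]X$ coincides with the image of the pullback of $\lambda_Y$ along $f^*\bfT^*[n]Y \to \bfT^*[n]Y$ under the natural lift of $f^*\L_Y \to \L_X$ to the total spaces. This is a tautological statement, since both $\lambda_X$ and $\lambda_Y$ are induced by the identities of the respective cotangent stacks through the universal property, and the leftward arrow in the correspondence is precisely the one extracted from functoriality of $\L$. Applying $\ddR$ propagates the compatibility to $\omega_X$ and $\omega_Y$, and pulling back along the zero section propagates it further to the isotropic structures $\gamma_X$ and $\gamma_Y$.

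Finally, from these compatibilities I would assemble a morphism between the defining diagram of $\phi_X$ and the $f^*$-pullback of the defining diagram of $\phi_Y$. The comparison arrows are the tangent morphism $\T_X \to f^*\T_Y$, the natural maps $\iota_X^*\T_{\bfT^*[n]X} \to f^*\iota_Y^*\T_{\bfT^*[n]Y}$ and $\iota_X^*\L_{\bfT^*[n]X}[n] \to f^*\iota_Y^*\L_{\bfT^*[n]Y}[n]$ obtained from the correspondence and the zero-section factorizations, together with the naturality of the splittings along the zero sections. Since the right-hand square in the defining diagram is Cartesian, $\phi_X$ is uniquely determined by the rest of the data, so the comparison of diagrams produces precisely the asserted naturality square.

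The main obstacle I expect is carrying through the compatibility of the tautological null-homotopies at the level of coherent $\infty$-categorical data rather than up to an unspecified further homotopy: one must show that the null-homotopy of $\iota_X^*\lambda_X$ underlying $\gamma_X$ is the image of the analogous tautological null-homotopy for $\lambda_Y$ transferred across the correspondence, and not merely homotopic to it. This ultimately reduces to the universal characterization of $\lambda$ as the shifted $1$-form corresponding to the identity of $\bfT^*[n](-)$, but pinning down the coherent comparison rather than just its existence requires a careful unravelling of the universal property.
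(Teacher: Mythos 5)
Your proposal follows essentially the same route as the paper: the correspondence $\bfT^*[n]X \leftarrow f^*\bfT^*[n]Y \rightarrow \bfT^*[n]Y$, the tautological compatibility of the $\lambda$'s coming from the universal property of the cotangent stack, and the propagation via $\ddR$ and zero-section pullback to a comparison of the defining diagrams of $\phi_X$ and $f^*\phi_Y$. The coherence issue you flag at the end is exactly the point the paper addresses, and it is resolved there by observing that all the relevant diagrams live under $X$, so that all the null-homotopies of pull-backs of $\lambda$'s to $X$ are themselves equivalent.
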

\begin{corollary}\label{cor-phi-equiv}
The isotropic structure on the zero section $\iota_Y:Y\to T^*[n]Y$ is non-degenerate, that is to say the map $\phi_Y$ is an equivalence. 
\end{corollary}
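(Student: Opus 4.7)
My plan is to use the naturality afforded by Lemma~\ref{lemma-phi} to reduce the statement to the affine case, and then verify it by a direct computation with the Liouville form.

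First, I would observe that all the ingredients entering the construction of $\phi_Y$---the tangent and cotangent complexes, the shifted cotangent stack, the zero section, the tautological form $\lambda_Y$, the presymplectic form $\omega_Y$, and the isotropic structure $\gamma_Y$---are compatible with smooth pullback. Combined with the naturality of $\phi$ from Lemma~\ref{lemma-phi} and the conservativity of smooth pullback on perfect modules, this lets me reduce to the case $Y=\mathrm{Spec}(A)$ for a cofibrant cdga $A$, where everything admits an explicit model.

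Next, in the affine case, I would identify $\lambda_Y$ with the canonical Liouville form: for a cofibrant generating family $(q_i)$ for $A$ together with a dual family $(p_i)\subset \T_A[-n]$, one has $\lambda_Y = \sum p_i\, \ddR q_i$ and $\omega_Y = \ddR\lambda_Y = \sum \ddR p_i \wedge \ddR q_i$. The pullback $\iota_Y^*\lambda_Y$ is strictly zero (all $p_i$ vanish on the zero section), so $\iota_Y^*\omega_Y = 0$ and the isotropic structure $\gamma_Y$ is the canonical constant null-homotopy. A direct computation of $\iota_Y^*\omega_Y^\flat$ under the splittings $\iota_Y^*\T_{\bfT^*[n]Y}\cong \L_Y[n]\oplus \T_Y$ and $\iota_Y^*\L_{\bfT^*[n]Y}[n]\cong \L_Y[n]\oplus \T_Y$ shows it acts as the off-diagonal swap $(a,b)\mapsto (b,-a)$. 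Reading off the $\T_Y\to \T_Y$ block picked out by the defining diagram of $\phi_Y$ then identifies $\phi_Y$ with $\pm\mathrm{id}_{\T_Y}$, which is in particular an equivalence.

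The main obstacle is carrying out this calculation in a manifestly coordinate-free and $\Gm$-equivariant way, as the paper emphasizes. The cleanest workaround uses the fiber grading: since $\omega_Y$ has fiber weight $+1$, the restricted adjoint
\[
\iota_Y^*\omega_Y^\flat : \L_Y[n]\{1\}\oplus \T_Y \longrightarrow \L_Y[n]\oplus \T_Y\{-1\}
\]
splits without mixing into its weight $0$ and weight $+1$ components. Tracking how $\lambda_Y$ arises as the universal class in $\pi_Y^*\L_Y[n]\{1\}$ identifies each component with the canonical duality pairing between $\T_Y$ and $\L_Y$, exhibiting $\phi_Y$ as the identity.
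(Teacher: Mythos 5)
Your affine computation is fine in outline (and consistent with Remark~\ref{rem-phi-equiv}, which asserts that $\phi$ is in fact homotopic to the identity), but your reduction to the affine case has a genuine gap. Conservativity of $u^*$ for a smooth atlas $u:U\to Y$ with $U$ affine only tells you that $\phi_Y$ is an equivalence if and only if $u^*\phi_Y$ is; it does not let you deduce $u^*\phi_Y$ from $\phi_U$. The naturality square of Lemma~\ref{lemma-phi} relates the two only through the canonical map $\T_U\to u^*\T_Y$, which for a smooth atlas is \emph{not} an equivalence (its fiber is $\T_{U/Y}$), so knowing that $\phi_U$ is an equivalence gives no control over $u^*\phi_Y$ on the part of $u^*\T_Y$ not hit in a way you can see from that single square. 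Note also that the shifted cotangent stack itself is not compatible with smooth pullback: $\bfT^*[n]U$ is the stack of sections of $\L_U[n]$, not of $u^*\L_Y[n]$, which is precisely why the paper introduces the intermediate stack $f^*\bfT^*[n]Y$ in the proof of Lemma~\ref{lemma-phi}. So the premise that ``all the ingredients are compatible with smooth pullback'' is not available in the form you use it.

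The paper's actual argument repairs exactly this point: it proceeds by induction on the degree of geometricity, writing an $(m+1)$-Artin stack $Y$ as $|G_*|$ for a smooth Segal groupoid $G_*$ in $m$-Artin stacks with quotient map $q:G_0\to Y$, and using the identification $q^*\T_Y\simeq \mathrm{colim}_n\big(e_n^*\T_{G_n}\big)$ over all unit maps $e_n:G_0\to G_n$. Naturality (Lemma~\ref{lemma-phi}) applied to every $e_n$ exhibits $q^*\phi_Y$ as the colimit of the simplicial diagram $(e_n^*\phi_{G_n})_{n\geq0}$, each term of which is an equivalence by the inductive hypothesis; conservativity of $q^*$ then concludes. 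This also shows why a one-step reduction to affines is unavailable: the terms $G_n$ are only $m$-Artin, so the induction is unavoidable, with the affine case (your Liouville-form computation, or the treatment in \cite{PTVV}) serving as the base case rather than the whole proof.
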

\begin{proof}[Proof of the Corollary]
We know that it is true for derived affine schemes locally of finite presentation. 
Let us then assume by induction that we know the result holds for $m$-Artin stacks (i.e.~Artin stacks that are $m$-geometric in the sense of \cite{HAG-II}). 
Let us then prove that it is true as well for $(m+1)$-Artin stacks. 
If $Y$ is an $(m+1)$-Artin stack then $Y=|G_*|:=colim_n(G_n)$, where $G_*=(G_n)_{n\geq0}$ is a smooth Segal groupoid in $m$-Artin stacks. 
Let $q:X=G_0\to Y$ be the natural map to the colimit. 
Note that $q^*\T_Y$ can be computed as the colimit of the simplicial diagram $(e_n^*\T_{G_n})_{n\geq0}$, 
where $e_n:G_0\to G_n$ is the unit map\footnote{
Recall that $QCoh(|G_\bullet|)\cong lim_n\big(QCoh(G_n)\big)$ and that $q^*\cong lim_n(e_n^*)$. 
Under the first identification we have that $\L_{|G_\bullet|}$ is the diagram $(\L_{G_n})_{n\geq0}$ and thus, using the second identification, $q^*\L_{|G_\bullet|}$ is equivalent to $lim_n(e_n^*\L_{G_n})$. 
Dualizing, we get that $q^*\T_{|G_\bullet|}$ is equivalent to $colim_n(e_n^*\T_{G_n})$. 
}.

Thanks to the above Lemma we have a simplicial diagram $(e_n^*\phi_{G_n})_{n\geq0}$ of morphisms of perfect modules on $X$, 
the colimit of which is the morphism $q^*\phi_Y$. 
Thanks to the induction hypothesis every $e_n^*\phi_{G_n}$ is an equivalence, and thus $q^*\phi_Y$ is an equivalence as well. 
One finally gets that $\phi_Y$ is an equivalence because $q^*$ is conservative ($q$ is a groupoid quotient morphism). 

We get the result by induction on the degree of geometricity. 
\end{proof}

\begin{remark}\label{rem-phi-equiv}
One can even prove that the equivalence $\phi_X$ is homotopic to the identity (this is true for derived schemes, and then the inductive proof is exactly the same). 
\end{remark}

\begin{proof}[Proof of the Lemma]
Let $f:X\to Y$ be a map of Artin stacks and consider the diagram
$$
\xymatrix{
         &  X            \ar[dl]\ar[r]^{f}\ar[d] & Y \ar[d]_{\iota_Y} \\
\bfT^*[n]X &  f^*\bfT^*[n]Y\ar[l]\ar[r]         & \bfT^*[n]Y}
$$
where $f^*\bfT^*[n]Y:=\bfT^*[n]Y\times_YX$ is the stack of sections of $f^*\L_Y[n]$. 

The morphism ~$\phi_X$, resp.~$\phi_Y$, is determined by the leading term of the isotropic structure $\gamma_X$, resp.~$\gamma_Y$.  
Hence the morphism $f^*\phi_Y$ is determined by the leading term of $f^*\gamma_Y$, considered as an isotropic structure on 
$$
f^*\iota_Y:X\longrightarrow f^*\bfT^*[n]Y
$$
and where we equip $f^*\bfT^*[n]Y$ with the $n$-shifted presymplectic structure $\tilde\omega$ given by the pull-back of $\omega_Y$ along $f^*\bfT^*[n]Y\to\bfT^*[n]Y$. 
Now the result follows from the following fact: 
\begin{itemize}
\item the pull-back of the pair $(\omega_X,\gamma_X)$ along $f^*\bfT^*[n]Y\to\bfT^*[n]X$ is naturally homotopic to the pair $(\tilde\omega,f^*\gamma_Y)$. 
\end{itemize}
Actually, the pull-back of $\lambda_X$ itself along $f^*\bfT^*[n]Y\to\bfT^*[n]X$ is naturally homotopic to the pull-back of $\lambda_Y$ along 
$f^*\bfT^*[n]Y\to\bfT^*[n]Y$\footnote{This is a slight variation on the obvious fact that for any morphism $u:A\to B$, $id_B\circ u$ and $u\circ id_A$ are equivalent.}. 
All diagrams we have written down live under $X$, therefore all previously considered homotopies between $0$ and pull-backs of $\lambda$'s on $X$ are themselves equivalent. 
The Lemma is proved. 
\end{proof}

\subsubsection{The isotropic fibration structure on the projection $\pi_X$ is non-degenerate}

We now give the proof of the third part of the theorem. 
Its structure is very similar to the one of the second part (apart from that we will have to use $\Gm$-equivariant enhancements at some point). 

\medskip

We have an isotropic fibration structure $\eta_X$ on the projection $\pi_X:\bfT^*[n]X\to X$, inducing in particular a commuting diagram 
$$
\xymatrix{
\pi_X^*\L_X[n] \ar[d]\ar[r] & \pi_X^*\L_X[n] \ar[d]\ar[r] & 0 \ar[d] \\
\T_{\bfT^*[n]X}    \ar[r] & \L_{\bfT^*[n]X}[n] \ar[r] & \pi_X^*\T_X 
}
$$
in which the right hand square is Cartesian. 

Let us write $\psi_X:\pi_X^*\L_X\to\pi_X^*\L_X$ for the $(-n)$-shift of the morphism $\pi_X^*\L_X[n]\to \pi_X^*\L_X[n]$ appearing in the above diagram. 
Our aim is to prove that $\psi_X$ is an equivalence. It follows from the next Lemma that it is sufficient to prove that $\iota^*X\psi_X:\L_X\to\L_X$ is an equivalence. 
\begin{lemma}\label{lem-psi}
$\psi_X=\pi_X^*\iota_X^*\psi_X$. 
\end{lemma}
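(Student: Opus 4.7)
The strategy is to exploit the fiber $\Gm$-grading on $\bfT^*[n]X = \mathbb R\mathrm{Spec}_X\mathrm{Sym}(\T_X[-n])$. Since $\T_X[-n]$ sits in weight~$1$, the pushforward $(\pi_X)_*\mathcal O_{\bfT^*[n]X} \simeq \mathrm{Sym}(\T_X[-n])$ is concentrated in non-negative fiber weights, with weight-$0$ summand $\mathcal O_X$. By the projection formula,
$$
\mathrm{End}_{\bfT^*[n]X}\bigl(\pi_X^*\L_X\bigr) \simeq \mathrm{End}_X(\L_X) \otimes_{\mathcal O_X} \mathrm{Sym}\bigl(\T_X[-n]\bigr),
$$
whose weight-$0$ summand is exactly $\mathrm{End}_X(\L_X)$. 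Since $\iota_X$ is $\Gm$-equivariant (for the trivial action on $X$) and splits $\pi_X$, $\iota_X^*\pi_X^* \simeq \mathrm{id}$; under the above decomposition, $\iota_X^*$ is precisely the projection killing every positive-weight summand, so that $\pi_X^*\iota_X^*$ acts on $\mathrm{End}(\pi_X^*\L_X)$ as projection onto the weight-$0$ summand. The lemma therefore reduces to verifying that $\psi_X$ is homogeneous of weight~$0$ for the fiber grading.

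To check this, I would track weights in the defining diagram of $\psi_X$. The presymplectic form $\omega_X$ is homogeneous of weight~$1$, so $\omega_X^\flat$ carries a $\Gm$-equivariant enhancement $\T_{\bfT^*[n]X}\{-1\} \to \L_{\bfT^*[n]X}[n]$. Using the $\Gm$-equivariant fiber sequences recalled in the Remark above, $\T_{\pi_X} \cong \pi_X^*\L_X[n]\{1\}$, so the source of the relevant restriction becomes $\T_{\pi_X}\{-1\} \cong \pi_X^*\L_X[n]\{1\}\{-1\} \cong \pi_X^*\L_X[n]$, sitting in weight~$0$; likewise the target summand $\pi_X^*\L_X[n] \subset \L_{\bfT^*[n]X}[n]$ is in weight~$0$. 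Since $\eta_X$ lives in $A^{2,cl,\{1\}}(\pi_X,n)$, the nullhomotopy it provides for the composite with the projection onto $\pi_X^*\T_X\{-1\}$ is itself $\Gm$-equivariant. Hence the induced lift $\psi_X[n]$, and therefore $\psi_X$, is a $\Gm$-equivariant morphism of weight~$0$. Combined with the first paragraph, this yields $\psi_X \simeq \pi_X^*\iota_X^*\psi_X$.

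The main obstacle is the weight bookkeeping in the second paragraph: one must check precisely that the $\{-1\}$ shift intrinsic to the $\Gm$-enhancement of $\omega^\flat$ cancels the $\{1\}$ shift appearing in $\T_{\pi_X}$. Should the shifts miscancel by even one unit, the first paragraph would force $\psi_X$ to equal $\pi_X^*\iota_X^*\psi_X = 0$, defeating the very non-degeneracy that $\psi_X$ is meant to witness.
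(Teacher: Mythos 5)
Your proof is correct and follows essentially the same route as the paper: both arguments reduce the claim to showing that $\psi_X$ is a $\Gm$-equivariant (fiber-weight-zero) endomorphism of the pullback of a pure-weight sheaf, by cancelling the $\{1\}$ in $\T_{\pi_X}\cong\pi_X^*\L_X[n]\{1\}$ against the fiber weight $1$ of $\omega_X$ and $\eta_X$, and then conclude that such an endomorphism is pulled back along $\pi_X$ (necessarily from $\iota_X^*\psi_X$). Your projection-formula description of $\mathrm{End}(\pi_X^*\L_X)$ merely makes explicit what the paper phrases as ``$\L_X\{1\}$ is a graded sheaf of pure weight on $X$, hence $\psi_X$ is the pull-back along $\pi_X$ of a morphism $\L_X\to\L_X$''.
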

\begin{proof}
Recall that, up to a shift, the morphism $\psi_X$ is the composition of the map $\T_{\pi_X}\to\pi_X^*\L_X[n]$ given by the (leading term of the) isotropic fibration structure $\eta_X$ 
with the canonical identification $\pi^*\L_X[n]\cong\T_{\pi_X}$ (dual to the one from Remark \ref{rem-identification}). 
Taking into account $\Gm$-equivariant enhancements, it becomes the composition of: 
\begin{itemize}
\item the map $\T_{\pi_X}\to\pi_X^*\L_X[n]\{1\}$ given by (the leading term of the) path $\eta$ that is homogeneous of weight $1$ for the fiber grading, 
\item with the identification $\pi^*\L_X[n]\{1\}\cong\T_{\pi_X}$ (dual to the one from Remark \ref{rem-gridentification}). 
\end{itemize}
Hence $\psi_X$ is a $\Gm$-equivariant map $\pi_X^*\L_X\{1\}\to \pi_X^*\L_X\{1\}$. 
Since $\L_X\{1\}$ is a graded sheaf of pure weight on $X$, then $\psi_X$ is the pull-back along $\pi_X$ of 
a morphism $\L_X\to\L_X$ (that has to be $\iota_X^*\psi_X$).  
\end{proof}

Let now $f:X\to Y$ be a map of Artin stacks and consider the diagram
$$
\xymatrix{
\bfT^*[n]X \ar[dr]_{\pi_X} & \ar[l]_{T^*f} f^*\bfT^*[n]Y \ar[d]^{f^*\pi_Y}\ar[r]^{\pi_Y^*f} & \bfT^*[n]Y \ar[d]^{\pi_Y} \\
                   &  X                         \ar[r]^{f} & Y  
}
$$
We will show that 
\begin{lemma}\label{lem-naturalpsi}
The morphism $\psi_X:\pi_X^*\L_X\to \pi_X^*\L_X$ is natural in $X$, in the sense that for every morphism $f:X\to Y$ we get a commuting square
$$
\xymatrix{
(f^*\pi_Y)^*\L_X          \ar[r]^{(T^*f)^*\psi_X}    & (f^*\pi_Y)^*\L_X   \\
(f^*\pi_Y)^*f^*\L_Y \ar[u]\ar[r]^{(\pi_Y^*f)^*\psi_Y} & (f^*\pi_Y)^*f^*\L_Y  \ar[u] 
}
$$
In particular we have a commuting square 
$$
\xymatrix{
\L_X          \ar[r]^{\iota_X^*\psi_X}    & \L_X   \\
f^*\L_Y \ar[u]\ar[r]^{f^*\iota_Y^*\psi_Y} & f^*\L_Y  \ar[u] 
}
$$
\end{lemma}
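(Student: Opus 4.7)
The plan is to mimic the strategy used for Lemma \ref{lemma-phi}: introduce the intermediate stack $f^*\bfT^*[n]Y=\bfT^*[n]Y\times_YX$, which fits into the diagram from the statement with a map $T^*f$ to $\bfT^*[n]X$ (induced by $f^*\L_Y\to\L_X$) and a map $\pi_Y^*f$ to $\bfT^*[n]Y$ (projection onto the first factor). The key observation, completely analogous to the footnoted remark in the proof of Lemma \ref{lemma-phi}, is that the two routes of pulling back the tautological $1$-form are canonically homotopic: as sections of $(f^*\pi_Y)^*f^*\L_Y[n]$ on $f^*\bfT^*[n]Y$, the pullback $(\pi_Y^*f)^*\lambda_Y$ is tautologically the universal section, while $(T^*f)^*\lambda_X$ is its image under the map $(f^*\pi_Y)^*f^*\L_Y\to(f^*\pi_Y)^*\L_X=(T^*f)^*\pi_X^*\L_X$ induced by $f^*\L_Y\to\L_X$; both agree by the construction of $T^*f$.

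Applying $\ddR$ and then taking the obvious homotopies into the relative de Rham stacks, one gets a natural homotopy between the pullbacks $(T^*f)^*(\omega_X,\eta_X)$ and $(\pi_Y^*f)^*(\omega_Y,\eta_Y)$ on $f^*\bfT^*[n]Y$, where the isotropic fibration structures are regarded relative to the corresponding composed projection. Upon extracting leading terms and dualizing, this naturally-compatible data yields a commuting square of morphisms of perfect $\mathcal{O}_{f^*\bfT^*[n]Y}$-modules identifying $(T^*f)^*\psi_X$ (as a map $(f^*\pi_Y)^*\L_X\to(f^*\pi_Y)^*\L_X$) with $(\pi_Y^*f)^*\psi_Y$ (as a map $(f^*\pi_Y)^*f^*\L_Y\to(f^*\pi_Y)^*f^*\L_Y$) through the natural maps induced by $f^*\L_Y\to\L_X$. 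This is exactly the first square of the lemma.

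For the second (``in particular'') square, pull back the first square along the zero section $X\to f^*\bfT^*[n]Y$, i.e.~the map $(\mathrm{id}_X,0)$; its postcomposition with $f^*\pi_Y$ is $\mathrm{id}_X$, while its postcomposition with $T^*f$ is $\iota_X$ and with $\pi_Y^*f$ is $\iota_Y\circ f$. Thus pulling back the first square gives precisely the diagram relating $\iota_X^*\psi_X$ and $f^*\iota_Y^*\psi_Y$. Together with Lemma \ref{lem-psi}, which lets us recover $\psi_X$ from $\iota_X^*\psi_X$ by pullback along $\pi_X$ (and similarly for $Y$), this shows that the first square is in fact forced by the second, and conversely the naturality statement propagates cleanly.

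The main obstacle will be tracking that all the identifications are compatible with the $\Gm$-equivariant enhancements (so that $\psi_X$ and $\psi_Y$ are being compared via the same weight-$1$ isomorphisms $\pi^*\L[n]\{1\}\cong\T_\pi$ from Remark \ref{rem-gridentification}) and ensuring that the homotopy between $(T^*f)^*\lambda_X$ and $(\pi_Y^*f)^*\lambda_Y$ is constructed directly from the universal property of $f^*\bfT^*[n]Y$ as the stack of $X$-points equipped with a section of $f^*\L_Y[n]$, so that it automatically lives over $X$ and is preserved by the passages to the relative de Rham complex and to leading terms. All the remaining verifications are, as in the proof of Lemma \ref{lemma-phi}, formal consequences of the identity $\mathrm{id}_B\circ u\simeq u\circ\mathrm{id}_A$ applied in the appropriate $\infty$-category over $A^{2,cl}(n)$ (or rather over $A^{2,cl}(-/-,n)$).
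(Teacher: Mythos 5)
Your proposal is correct and follows essentially the same route as the paper: it factors everything through the intermediate stack $f^*\bfT^*[n]Y$, reduces the naturality of $\psi$ to the canonical homotopy between $(T^*f)^*\lambda_X$ and $(\pi_Y^*f)^*\lambda_Y$ (hence between the corresponding $\omega$'s and $\eta$'s, using that all relevant homotopies live over $X$), and then extracts leading terms; the pullback along the zero section for the ``in particular'' square and the attention to $\Gm$-equivariant enhancements match the paper's use of Lemma \ref{lem-psi}.
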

\begin{corollary}\label{cor-psi-equiv}
The isotropic fibration structure on the projection $\pi_X:T^*[n]X\to X$ is non-degenerate, that is to say the map $\psi_X$ is an equivalence. 
\end{corollary}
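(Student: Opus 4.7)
The plan is to mirror the inductive strategy used for Corollary \ref{cor-phi-equiv}. First I would invoke Lemma \ref{lem-psi}, which gives $\psi_X = \pi_X^* \iota_X^* \psi_X$; since $\pi_X^*$ preserves equivalences, it suffices to show that $\iota_X^* \psi_X : \L_X \to \L_X$ is an equivalence. The rest of the argument proceeds by induction on the degree of geometricity of $X$.

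For the base case I would take $X$ a derived affine scheme locally of finite presentation. Here $\bfT^*[n]X$, the tautological $1$-form $\lambda_X$, the closed $2$-form $\omega_X = \ddR \lambda_X$, and the isotropic fibration structure $\eta_X$ all admit completely explicit descriptions in terms of a cofibrant cdga model of $X$. An explicit computation (essentially the shifted counterpart of the classical fact that $\ddR$ applied to the Liouville $1$-form produces the standard symplectic form on $T^*M$) shows that $\iota_X^* \psi_X$ is homotopic to the identity, in the spirit of Remark \ref{rem-phi-equiv}.

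For the inductive step, assume the result holds for all $m$-Artin stacks and let $Y$ be an $(m+1)$-Artin stack. Present $Y = |G_\bullet|$ for a smooth Segal groupoid $G_\bullet$ in $m$-Artin stacks, and let $q : X = G_0 \to Y$ be the natural map to the colimit. As in the proof of Corollary \ref{cor-phi-equiv}, $q^* \L_Y$ computes as the limit of the simplicial diagram $(e_n^* \L_{G_n})_{n \geq 0}$, where $e_n : G_0 \to G_n$ is the unit map. Lemma \ref{lem-naturalpsi} then tells us that the collection $(e_n^* \iota_{G_n}^* \psi_{G_n})_{n \geq 0}$ assembles into a compatible simplicial diagram of endomorphisms whose limit is $q^* \iota_Y^* \psi_Y$. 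By the induction hypothesis each $\iota_{G_n}^* \psi_{G_n}$, and hence each $e_n^* \iota_{G_n}^* \psi_{G_n}$, is an equivalence, so the limit $q^* \iota_Y^* \psi_Y$ is an equivalence as well. Since $q$ is a groupoid quotient, $q^*$ is conservative, and I conclude that $\iota_Y^* \psi_Y$ is an equivalence. This yields the corollary by induction on the degree of geometricity.

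The main obstacle will be the base case: one must verify, in the cofibrant-cdga model for an affine $X$, that the composition of the morphism $\T_{\pi_X} \to \pi_X^* \L_X[n]\{1\}$ coming from the leading term of $\eta_X$ with the tautological identification $\pi_X^* \L_X[n]\{1\} \simeq \T_{\pi_X}$ dual to Remark \ref{rem-gridentification} actually reduces to the identity after applying $\iota_X^*$. The care with the $\Gm$-equivariant grading is what distinguishes this from the argument for $\phi_X$, but once this local computation is secured, together with the naturality supplied by Lemma \ref{lem-naturalpsi}, the descent step is essentially formal.
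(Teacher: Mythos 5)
Your proposal follows essentially the same route as the paper: reduce via Lemma \ref{lem-psi} to showing $\iota_X^*\psi_X$ is an equivalence, verify the affine base case, and induct on the degree of geometricity using a Segal groupoid presentation, the naturality of Lemma \ref{lem-naturalpsi}, and conservativity of $q^*$. The only cosmetic slip is calling $(e_n^*\L_{G_n})_{n\geq 0}$ a simplicial diagram when its limit is taken over a cosimplicial one; otherwise the argument matches the paper's proof.
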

\begin{proof}[Proof of the Corollary]
It follows from Lemma \ref{lem-psi} that it is sufficient to show that $\iota_X^*\psi_X$ is an equivalence, the proof of which is almost identical to the one 
of Corollary \ref{cor-phi-equiv}. Let us repeat it for the sake of clarity. 

We know that it is true for derived affine schemes locally of finite presentation. 
Let us then assume by induction that we know the result holds for $m$-Artin stacks and prove that it does for $(m+1)$-Artin stacks as well. 
If $Y$ is an $(m+1)$-Artin stack then $Y=|G_*|$, where $G_*$ is a smooth Segal groupoid in $m$-Artin stacks. 
Let $q:X=G_0\to Y$ be the natural map to the colimit. 
Note that $q^*\L_Y$ can be computed as the limit of the cosimplicial diagram $(e_n^*\L_{G_n})_{n\geq0}$, 
where $e_n:G_0\to G_n$ is the unit map.

Thanks to the above Lemma we have a cosimplicial diagram $(e_n^*\iota_{G_n}^*\psi_{G_n})_{n\geq0}$ of morphisms of perfect modules on $X$, 
the limit of which is the morphism $q^*\iota_Y^*\psi_Y$. 
Thanks to the induction hypothesis every $e_n^*\iota_{G_n}^*\psi_{G_n}$ is an equivalence, and thus $q^*\iota_Y^*\psi_Y$ is an equivalence as well. 
One finally gets that $\iota_Y^*\psi_Y$ is an equivalence because $q^*$ is conservative ($q$ is a groupoid quotient morphism). 

We get the result by induction on the degree of geometricity. 
\end{proof}

\begin{remark}
Just as in Remark \ref{rem-phi-equiv} one can even prove that the equivalence $\psi_X$ is homotopic to the identity. 
\end{remark}

\begin{proof}[Proof of the Lemma]
The morphism ~$\psi_X$, resp.~$\psi_Y$, is determined by the leading term of the isotropic fibration structure $\eta_X$, resp.~$\eta_Y$.  
Hence the morphism $(\pi_Y^*f)^*\psi_Y$ is determined by the leading term of $(\pi_Y^*f)^*\eta_Y$, considered as an isotropic fibration structure on 
$$
f^*\pi_Y:f^*\bfT^*[n]Y\longrightarrow X
$$
and where we equip $f^*\bfT^*[n]Y$ with the $n$-shifted presymplectic structure $(\pi_Y^*f)^*\omega_Y$. 
Now the result follows from the following fact: 
\begin{itemize}
\item the pull-back of $(T^*f)^*\eta_X$ is naturally homotopic to $(\pi_Y^*f)^*\eta_Y$. 
\end{itemize}
First of all this claim makes sense as we have already seen that the analogous statement for $\omega$'s is true (see the proof of Lemma \ref{lemma-phi}). 
It is even true for $\lambda$'s. Since the part of the above diagram we are interested in lives over $X$, all images of previously considered homotopies between $0$ and images of $\lambda$'s in the space of $1$-forms relative to $X$ are themselves equivalent. The Lemma is proved. 
\end{proof}

\begin{proof}[End of the proof of Theorem \ref{theorem-cotangent}]
The first part of the Theorem follows from the third one. Namely, Lemma \ref{lemma-nd} tells us that if we have a non-degenerate isotropic fibration on an $n$-shifted presymplectic Artin stack then it is $n$-shifted symplectic.  
\end{proof}

\subsection{A Lagrangian structure on shifted conormal stacks}

Let $f:X\to Y$ be a morphism of Artin stacks. 
We have already seen that there is a tautological $n$-shifted $1$-form on $\bfT^*[n]Y$, mainly determined by the identity morphism of $\bfT^*[n]Y$ (and the map $\pi_Y^*\L_Y[n]\to \L_{\bfT^*[n]Y}$). 
We now want to consider the morphism of Artin stacks $\ell_f:\bfT^*_X[n]Y\to \bfT^*[n]Y$, where $\bfT^*_X[n]Y:=\bfT^*[n-1]f=\mathbb{A}(\L_f[n-1])$ is the \textit{$n$-shifted conormal stack} of $f$. 
First of all observe that this morphism factors through $f^*\bfT^*[n]Y$, so that $\ell_f^*\omega_Y$ is homotopic to the pull-back of $\omega_X$ along the sequence
$$
\bfT^*_X[n]Y\to f^*\bfT^*[n]Y\to \bfT^*[n]X\,.
$$
Hence $\ell_f^*\omega_Y$ is equivalent to $0$ as the composed morphism of the above sequence factors through the zero section $X\to\bfT^*[n]X$. 
Let us denote $\omega_f$ the isotropic structure we have just constructed on $\ell_f$. 

Our aim is to prove the following: 
\begin{theorem}\label{theorem-lagrangian}
The $n$-shifted conormal stack is Lagrangian, that is to say the isotropic structure $\omega_f$ is non-degenerate. 
\end{theorem}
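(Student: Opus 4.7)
The plan is to prove non-degeneracy of $\omega_f$ by exhibiting the null-homotopic sequence $\mathbb{T}_{\bfT^*_X[n]Y}\to\ell_f^*\mathbb{T}_{\bfT^*[n]Y}\to\mathbb{L}_{\bfT^*_X[n]Y}[n]$ induced by $\omega_f$ as the middle row of a commutative $3\times 3$ diagram in $\mathrm{QCoh}(\bfT^*_X[n]Y)$, whose outer two rows and all three columns are fiber sequences. The $3\times 3$ lemma in a stable $\infty$-category then forces the middle row to be fibered, which is exactly what non-degeneracy of $\omega_f$ demands.

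Let $p:\bfT^*_X[n]Y\to X$ denote the projection. The three columns are: for the left, the tangent fiber sequence of $\bfT^*_X[n]Y=\mathbb{A}(\L_f[n-1])$ as a linear stack over $X$, reading $p^*\L_f[n-1]\to\mathbb{T}_{\bfT^*_X[n]Y}\to p^*\T_X$; for the middle, the pullback along $\ell_f$ of the tangent fiber sequence of $\pi_Y$ (using $\pi_Y\circ\ell_f=f\circ p$), reading $p^*f^*\L_Y[n]\to\ell_f^*\mathbb{T}_{\bfT^*[n]Y}\to p^*f^*\T_Y$; for the right, the $n$-shifted cotangent sequence of $p$, reading $p^*\L_X[n]\to\L_{\bfT^*_X[n]Y}[n]\to p^*\T_f[1]$ (whose cofiber uses $\L_{\bfT^*_X[n]Y/X}\simeq p^*\T_f[1-n]$). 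The top and bottom rows are obtained by pulling back along $p$ the fiber sequence $\L_f[-1]\to f^*\L_Y\to\L_X$ (shifted by $n$) and its dual $\T_X\to f^*\T_Y\to\T_f[1]$.

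To close the diagram I must verify that the two squares formed by the middle row commute. Compatibility of $d\ell_f$ with the columns follows from the canonical factorization $\bfT^*_X[n]Y\to f^*\bfT^*[n]Y\to\bfT^*[n]Y$, which presents $\bfT^*_X[n]Y$ as the fiber of $f^*\bfT^*[n]Y\to\bfT^*[n]X$ over the zero section $\iota_X$: differentiating this factorization identifies $d\ell_f$ on relative tangents with the connecting map $\L_f[n-1]\to f^*\L_Y[n]$ and on absolute tangents with $df$. The second compatibility comes from unpacking the construction of $\omega_f$ as the homotopy making the composition $\bfT^*_X[n]Y\to\bfT^*[n]X$ factor through the zero section; combined with the symplectic identification $\ell_f^*\mathbb{T}_{\bfT^*[n]Y}\simeq\ell_f^*\L_{\bfT^*[n]Y}[n]$ provided by Theorem \ref{theorem-cotangent}.1 and naturality of the cotangent functor, this shows the induced map $\ell_f^*\mathbb{T}_{\bfT^*[n]Y}\to\L_{\bfT^*_X[n]Y}[n]$ restricts to the canonical $f^*\L_Y[n]\to\L_X[n]$ on fibers and to the connecting $f^*\T_Y\to\T_f[1]$ on bases. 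This second compatibility is the main obstacle, as it requires a careful homotopy-coherent tracking of $\omega_f$ through the symplectic identification on $\bfT^*[n]Y$; granted it, the $3\times 3$ lemma immediately yields the desired fiber sequence.
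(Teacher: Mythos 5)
Your formal skeleton is close to the paper's own: the proof there likewise reduces non-degeneracy of $\omega_f$ to a $3\times 3$ diagram of fiber sequences (Lemma \ref{lemma-ndnd}, phrased via an ``isotropic square'' built from the factorization $\bfT^*_X[n]Y\to f^*\bfT^*[n]Y\to\bfT^*[n]Y$ together with the Lagrangian fibration structure $\eta_Y$ on $\pi_Y$), and your choices of columns and outer rows, including the identification $\L_{\bfT^*_X[n]Y/X}\simeq p^*\T_f[1-n]$, are correct. If the diagram were coherently commutative with the \emph{canonical} maps in the outer rows, the $3\times 3$ lemma would indeed finish the proof.

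But that is exactly where the proposal has a genuine gap, and it is not a bookkeeping step that can be ``granted'': it is the mathematical content of the theorem. The structure $\omega_f$ is assembled from a chain of homotopies (factoring $\ell_f$ through $f^*\bfT^*[n]Y$ and the zero section of $\bfT^*[n]X$), and what its leading term induces on the subquotients $p^*f^*\L_Y[n]$, $p^*\L_X[n]$ and $p^*\T_f[1]$ is \emph{a priori} not the canonical map but some endomorphism --- the analogues of the maps $\phi_X$, $\psi_X$, $\psi_f$ in the paper --- and for a general Artin stack there is no coordinate computation, and no purely diagrammatic argument, identifying these with equivalences (let alone with the identity); if there were, the non-degeneracy arguments of \S\ref{subsubiotand} and its sequel would be one line each. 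The mechanism the paper uses, and which your proposal never invokes, is: (i) exploit the $\Gm$-equivariance of the whole situation (everything in sight is homogeneous of weight $1$ for the fiber grading) to descend the offending endomorphism of $p^*\T_f$ to an endomorphism $\psi_f$ of $\T_f$ on $X$ itself, as in Lemma \ref{lem-psi}; (ii) prove $\psi_f$ is natural in the square $(f:X\to Y)$, as in Lemmas \ref{lemma-phi} and \ref{lem-naturalpsi}; (iii) induct on the degree of geometricity via smooth Segal groupoid presentations, using conservativity of $q^*$ for a groupoid quotient $q$, to reduce to the case where $X$ and $Y$ are affine; (iv) verify the affine case directly. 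Until you supply this (or some substitute) for your ``second compatibility'', the proposal only shows that the theorem follows from a statement essentially as hard as the theorem itself.
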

Let us provide two rather extreme examples: 
\begin{itemize}
\item consider $f:X\to pt$, where $pt$ is the terminal stack. 
In this case $\bfT^*_X[n+1]pt\cong \bfT^*[n]X$ has a Lagrangian morphism to the point equipped with its canonical $(n+1)$-shifted symplectic structure. 
We get back in a rather involved way the $n$-shifted symplectic structure on an $n$-shifted cotangent stack.
\item consider the identity morphism $X\to X$. In this case $\bfT^*_X[n]X\cong X$ and we get back that the zero section is Lagrangian. 
\end{itemize}

\subsubsection{Isotropic squares}

Our aim is to apply a strategy similar to what we did in the previous Section. 
To do so we need to exhibit a structure on the projection $\bfT^*_X[n]Y\to X$ that resembles to the one of a Lagrangian fibration. 
This is what we do in this Subsection. 

\begin{definition}
An \emph{$n$-shifted pre-isotropic square} is commuting square of Artin stacks 
$$
\xymatrix{
\mathbf{E} \ar[r]^g\ar[d] & \mathbf{F} \ar[d] \\
X \ar[r]^f       & Y
}
$$
together with the following additional structures: 
\begin{itemize}
\item an $n$-shifted presymplectic structure $\omega$ on $\mathbf{F}$: $\omega\in A^{2,cl}(\mathbf{F},n)$. 
\item an isotropic structure $\gamma$ on $\mathbf{E}\to \mathbf{F}$: $\gamma\in Path_{g^*\omega,0}\big(A^{2,cl}(\mathbf{E},n)\big)$. 
\item an isotropic fibration structure $\eta$ on $\mathbf{F}\to Y$: $\eta\in Path_{\omega_{/Y},0}\big(A^{2,cl}(\mathbf{F}/Y,n)\big)$. 
\end{itemize}
\end{definition}

Note that composing $\gamma_{/X}$ with $(g^*\eta)^{-1}$ one gets an element $\omega'\in\Omega_0A^{2,cl}(\mathbf{E}/X,n)\cong A^{2,cl}(\mathbf{E}/X,n-1)$ 

\begin{example}\label{example-preiso}
There is an $n$-shifted pre-isotropic square structure with $\mathbf{F}=\bfT^*[n]Y$, $\mathbf{E}=\bfT^*_X[n]Y$, $g=\ell_f$, $\omega=\omega_Y$, $\gamma=\omega_f$ and $\eta=\eta_Y$. 
\end{example}
\begin{definition}
Borrowing the above notation, an \emph{$n$-shifted isotropic square} is an $n$-shifted pre-isotropic square equipped with a homotopy $\xi$ in $A^{2,cl}(\mathbf{E}/X,n)$ between the two isotropic fibration structures $\gamma_{/X}$ and $g^*\eta$ on $\mathbf{E}\to X$. 
\end{definition}

We let the reader prove the following: 
\begin{proposition}
The $n$-shifted pre-isotropic square from Example \ref{example-preiso} can be upgraded to an $n$-shifted isotropic square. \hfill$\Box$
\end{proposition}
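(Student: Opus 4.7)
The plan is to lift the desired homotopy $\xi$ to the level of $1$-forms, where both sides of the comparison are canonically determined by the tautological $1$-form $\lambda_Y$, and then apply $\ddR$. Both the isotropic structure $\gamma=\omega_f$ and the isotropic fibration structure $\eta=\eta_Y$ arise as $\ddR$-images of canonical null-homotopies at the $1$-form level. On one hand, the same argument that produces $\omega_f$ from $\ell_f^*\omega_Y$ applies verbatim to $\lambda$'s and yields a canonical null-homotopy $\nu^\omega$ of $\ell_f^*\lambda_Y$ in $A^1(\bfT_X^*[n]Y,n)$: the composition $\bfT_X^*[n]Y\to f^*\bfT^*[n]Y\to\bfT^*[n]X$ factors through the zero section of $\bfT^*[n]X$ along which $\lambda_X$ canonically vanishes, and $\ddR\nu^\omega=\omega_f$. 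On the other hand, $\eta_Y$ was itself built as $\ddR$ of a canonical null-homotopy $\nu^\eta$ of $(\lambda_Y)_{/Y}$ in $A^1(\bfT^*[n]Y/Y,n)$, coming from the fact that $\lambda_Y$ lifts to a section of $\pi_Y^*\L_Y[n]$, which sits in the kernel of $\L_{\bfT^*[n]Y}[n]\to\L_{\pi_Y}[n]$.

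Relativizing $\nu^\omega$ over $X$ and pulling $\nu^\eta$ back along $g=\ell_f$, via the evident map $A^1(\bfT^*[n]Y/Y,n)\to A^1(\bfT_X^*[n]Y/X,n)$ induced by the commuting square $(\mathbf{E},X)\to(\mathbf{F},Y)$, one obtains two null-homotopies in $A^1(\bfT_X^*[n]Y/X,n)$ of the same element $(\ell_f^*\lambda_Y)_{/X}\simeq g^*(\lambda_Y)_{/Y}$. The key step is to exhibit a canonical $2$-cell between them. To do this, I would trace through the canonical equivalence between $\ell_f^*\lambda_Y$ and the pullback of $\lambda_X$ along $\bfT_X^*[n]Y\to\bfT^*[n]X$, and thereby reduce the comparison to the analogous statement on $\bfT^*[n]X$: namely, that the two evident null-homotopies of $(\lambda_X)_{/X}$ — one obtained from the null-homotopy of $\lambda_X$ along $\iota_X$ after relativization over $X$, and the other the direct null-homotopy of $(\lambda_X)_{/X}$ provided by $\eta_X$ at the $1$-form level — agree canonically. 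Both are manifestations of the same universal fact, that $\lambda_X$ is a section of $\pi_X^*\L_X[n]$ which moreover vanishes tautologically along the zero section, and this forces the required coherence. Applying $\ddR$ to the resulting $2$-simplex then produces the desired homotopy $\xi$ in $A^{2,cl}(\bfT_X^*[n]Y/X,n)$.

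The main obstacle I anticipate is the careful bookkeeping of the several pullback and relativization operations and their mutual compatibilities. One must manipulate natural maps of graded mixed modules among $\mathbf{DR}(\bfT^*[n]Y)$, $\mathbf{DR}(\bfT_X^*[n]Y)$, $\mathbf{DR}(\bfT^*[n]Y/Y)$, and $\mathbf{DR}(\bfT_X^*[n]Y/X)$, and fit them into a commuting cube encoding the coherences needed to align the two null-homotopies. No individual step is difficult in isolation: every ingredient is forced by universal properties of the shifted cotangent stack and of $\lambda$, so the required higher coherence is essentially formal, but organizing the diagrammatic data in a manageable form will take real care.
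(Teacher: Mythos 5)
The paper offers no proof of this Proposition --- it is explicitly left to the reader --- so there is nothing to compare against line by line; but your strategy (descend everything to the level of $1$-forms, compare the two null-homotopies of $(\ell_f^*\lambda_Y)_{/X}$ there, then apply $\ddR$) is exactly the one the paper's earlier arguments suggest (cf.\ ``It is even true for $\lambda$'s'' in the proofs of Lemmas \ref{lemma-phi} and \ref{lem-naturalpsi}), and it does work. One step of your write-up is imprecise as stated, though the intended content is correct. After using the footnote identification $\ell_f^*\lambda_Y\simeq (T^*f\circ a)^*\lambda_X$ (where $a:\bfT^*_X[n]Y\to f^*\bfT^*[n]Y$), the comparison does \emph{not} literally take place on $\bfT^*[n]X$, and the phrase ``the null-homotopy of $\lambda_X$ along $\iota_X$ after relativization over $X$'' does not typecheck: it is $\iota_X^*\lambda_X$ that is null-homotopic, and relativizing a form on $X$ over $X$ lands in the trivial space $A^1(X/X,n)$. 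The correct formulation is this: writing $p:\bfT^*_X[n]Y\to X$ for the projection and $\tau$ for the tautological section of $p^*\L_f[n-1]$, both null-homotopies are obtained by evaluating on $\tau$ the two canonical null-homotopies of the composite
\[
p^*\L_f[n-1]\longrightarrow p^*\L_X[n]\longrightarrow \L_{p}[n],
\]
the first arrow being null because $\L_f[-1]\to f^*\L_Y\to\L_X$ is a fiber sequence (this gives $(\nu^\omega)_{/X}$), the second because $p^*\L_X\to\L_{\bfT^*_X[n]Y}\to\L_p$ is a cofiber sequence (this gives $g^*\nu^\eta$, after checking that the pullback of $\eta_Y$'s defining null-homotopy along the square is carried to the tautological one for $p$ by the induced map of cofiber sequences). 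The ``required coherence'' you invoke is then not merely plausible but formal for a precise reason: for composable maps $w_1,w_2$ with null-homotopies $h_1,h_2$, the Leibniz rule $\partial(h_2h_1)=w_2h_1-h_2w_1$ (equivalently, the interchange law for the composition $\mathrm{Map}(A,B)\times\mathrm{Map}(B,C)\to\mathrm{Map}(A,C)$) provides a canonical $2$-cell between $w_{2*}h_1$ and $w_1^*h_2$. With that mechanism named, your argument closes: applying $\ddR$ to this $2$-cell evaluated at $\tau$ produces $\xi$.
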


\subsubsection{Non-degeneracy of the isotropic structure on the $n$-shifted conormal stack}

Assume we are given an isotropic square. Borrowing the previous notation, we set $\mathbf{U}:=\T_{\mathbf{E}}$, $\mathbf{V}:=g^*\T_{\mathbf{F}/Y}$, $\mathbf{W}:=g^*\T_{\mathbf{F}}$ and $\mathbf{Z}:=\T_{\mathbf{E}/X}$. 
Recall that we have a lot of structure: 
\begin{itemize}
\item a degree $n$ non-degenerate pairing on $\mathbf{W}$ (given by $\omega$). 
\item a path from $0$ to its pull-back along $\mathbf{U}\to\mathbf{W}$ (given by $\gamma$). 
\item a non-degenerate path from $0$ to its pull-back along $\mathbf{V}\to\mathbf{W}$ (given by $\eta$). 
\item these two paths compose to a self-homotopy of $0$ in the space of degree $n$ pairings on $\mathbf{U}\times_{\mathbf{W}}\mathbf{V}$, defining a degree $n-1$ pairing $\beta$ on it. 
\item a path from $0$ to the pull-back of this last pairing along $\mathbf{Z}\to \mathbf{U}\times_{\mathbf{W}}\mathbf{V}$ (given by $\xi$).
\end{itemize}

\begin{lemma}\label{lemma-ndnd}
If $\xi$ is non-degenerate, then so are $\beta$ and $\gamma$
\end{lemma}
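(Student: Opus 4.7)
The non-degeneracy of $\beta$ follows directly from Lemma \ref{lemma-nd} applied to the morphism $\mathbf{Z}\to\mathbf{U}\times_\mathbf{W}\mathbf{V}$ equipped with the degree $n-1$ pairing $\beta$ and the isotropic structure $\xi$: since $\xi$ is non-degenerate by hypothesis, so is $\beta$, i.e.~$\beta^\flat:P\to P^\vee[n-1]$ is an equivalence (writing $P:=\mathbf{U}\times_\mathbf{W}\mathbf{V}$).

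For the non-degeneracy of $\gamma$, the plan is to reconstruct $\mathbf{U}^\vee[n]$ as the cofiber of $\mathbf{U}\to\mathbf{W}$ in a manner compatible with the $\gamma$-null-homotopy. First, I dualize the pullback square defining $P$ to obtain $P^\vee\simeq \mathbf{U}^\vee\sqcup_{\mathbf{W}^\vee}\mathbf{V}^\vee$. Shifting by $n-1$ and substituting $\mathbf{W}^\vee[n]\simeq\mathbf{W}$ (non-degeneracy of $\omega$) and $\mathbf{V}^\vee[n]\simeq\mathrm{cof}(\mathbf{V}\to\mathbf{W})$ (non-degeneracy of $\eta$), a short pushout manipulation identifies $P^\vee[n-1]$ with the cofiber of the $(-1)$-shift of the pairing map $\mathbf{V}\to\mathbf{W}\to\mathbf{U}^\vee[n]$ built from $\omega^\flat$. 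Composing with $\beta^\flat$ and shifting back by $1$, this yields a fiber sequence
$$
P\longrightarrow \mathbf{V}\longrightarrow \mathbf{U}^\vee[n]
$$
whose second map is the $\omega$-pairing between $\mathbf{V}$ and $\mathbf{U}$ in $\mathbf{W}$.

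The next step is to compare this derived fiber sequence with the tautological pullback fiber sequence $P\to\mathbf{V}\to\mathrm{cof}(\mathbf{U}\to\mathbf{W})$, together with the canonical morphism $\mathrm{cof}(\mathbf{U}\to\mathbf{W})\to\mathbf{U}^\vee[n]$ induced by the $\gamma$-null-homotopy. These will fit into a commutative map of fiber sequences which is the identity on $P$ and on $\mathbf{V}$; the induced map on cofibers is then an equivalence $\mathrm{cof}(\mathbf{U}\to\mathbf{W})\simeq\mathbf{U}^\vee[n]$, which amounts to $\mathbf{U}\to\mathbf{W}\to\mathbf{U}^\vee[n]$ being a fiber sequence, i.e.~$\gamma$ non-degenerate.

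The hard part will be to check that the map $P\to\mathbf{V}$ arising from the derived fiber sequence indeed coincides with the pullback projection $p_\mathbf{V}$. This amounts to unpacking $\beta^\flat$ as the difference of the two null-homotopies (coming from $\gamma$ and $\eta$) of the pullback of $\omega^\flat$ to $P$, and tracing how $\beta^\flat$ intertwines the pushout legs $\mathbf{U}^\vee[n-1],\mathbf{V}^\vee[n-1]\to P^\vee[n-1]$ (dual to the pullback projections) with the pairing maps. Once this compatibility is established, the remainder of the argument is formal in a stable symmetric monoidal $\infty$-category.
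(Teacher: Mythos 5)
Your first half is exactly the paper's: $\beta$ is non-degenerate by Lemma \ref{lemma-nd} applied to $\mathbf{Z}\to\mathbf{U}\times_{\mathbf{W}}\mathbf{V}$ with the isotropic structure $\xi$. For the second half you are, in substance, running the same argument as the paper, but in an unrolled form that manufactures the difficulty you flag at the end. The paper packages all the data at once into a single commuting $3\times3$ diagram whose rows are $\mathbf{U}\times_{\mathbf{W}}\mathbf{V}\to\mathbf{V}\to 0$, then $\mathbf{U}\to\mathbf{W}\to\mathbf{V}^\vee[n]$, then $0\to\mathbf{U}^\vee[n]\to(\mathbf{U}\times_{\mathbf{W}}\mathbf{V})^\vee[n]$; the commutativity data of this diagram is supplied by $\omega$, $\gamma$, $\eta$ and the duality of the defining pullback square, so the vertical map $\mathbf{U}\times_{\mathbf{W}}\mathbf{V}\to\mathbf{V}$ \emph{is} the projection from the outset and never has to be re-identified. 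One then observes that the top-left square is Cartesian by definition, the top-right by non-degeneracy of $\eta$, the bottom-right by definition (it is the shifted dual of the top-left), and the outer square by non-degeneracy of $\beta$; since in a stable $\infty$-category any two of (near square, far square, composite) being Cartesian forces the third, the bottom-left square is Cartesian, which is precisely the non-degeneracy of $\gamma$. Your intermediate fiber sequence $P\to\mathbf{V}\to\mathbf{U}^\vee[n]$ is exactly the assertion that the left column rectangle of this diagram is Cartesian, and your dualization/pushout manipulations reproduce the pasting of the right column; so your route is not wrong, but as written it is incomplete until the ``hard part'' is done, and the cleanest way to do it is to not separate the homotopies in the first place: state the whole commuting diagram and invoke the pasting law, which is what the paper does.
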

\begin{proof}
We have already seen that if $\xi$ is non-degenerate then so is $\beta$ (this is Lemma \ref{lemma-nd}). Now the data of $\omega$, $\gamma$ and $\eta$ provides us with the following commuting diagram 
$$
\xymatrix{
\mathbf{U}\times_{\mathbf{W}}\mathbf{V} \ar[d]\ar[r] & \mathbf{V} \ar[d]\ar[r] & 0 \ar[d] \\
\mathbf{U} \ar[d]\ar[r] & \mathbf{W} \ar[d]\ar[r] & \mathbf{V}^\vee[n] \ar[d] \\
0 \ar[r] & \mathbf{U}^\vee[n] \ar[r] & (\mathbf{U}\times_{\mathbf{W}}\mathbf{V})^\vee[n]
}
$$
We observe that the following squares are Cartesian: top-left (by definition), top-right (by non-degeneracy of $\eta$), bottom-right (by definition), and the big one (by non-degeneracy of $\beta$). 
Therefore the bottom-left square is Cartesian as well, which means that $\gamma$ is non-degenerate. 
\end{proof}

\begin{proof}[End of the proof of Theorem \ref{theorem-lagrangian}]
Note that, if $\mathbf{E}=\mathbb{A}(E)$, $\mathbf{F}=\mathbb{A}(F)$ and $g$ is linear in the fiber (i.e.~$g$ comes from a morphism $E\to f^*F$) then we have that the cofiber of $\mathbf{Z}\to \mathbf{U}\times_{\mathbf{W}}\mathbf{V}$ is 
$$
cofib\big(\pi_X^*E\to \T_{\mathbf{E}}\times_{g^*\T_{\mathbf{F}}}g^*\T_{\mathbf{F}/Y}\big)\cong \pi_X^*\T_{X/Y}=\pi_X^*\T_f\,.
$$

This is typically what happens in the situation of Example \ref{example-preiso}. 
In this case the null-homotopic sequence $\mathbf{Z}\to \mathbf{U}\times_{\mathbf{W}}\mathbf{V}\to \mathbf{Z}^\vee[n-1]$ then takes the following form: 
$$
\pi_X^*\L_f[n-1]\longrightarrow {\mathbf{U}}\times_{\mathbf{W}}\mathbf{V}\longrightarrow \pi_X^*\T_f\,.
$$
Hence we get a map $\pi_X^*\T_f\cong hocofib\big(\mathbf{Z}\to \mathbf{U}\times_{\mathbf{W}}\mathbf{V}\big)\to \pi_X^*\T_f$, which is an equivalence if and only if $\xi$ is non-degenerate. 
As before, one can show that this map must be an equivalence by reasoning along the following steps: 
\begin{enumerate}
\item using $\Gm$-equivariant enhancements, one proves (as in the proof of Lemma \ref{lem-psi}) that the map $\pi_X^*\mathbb{T}_f\to \pi_X^*\T_f$ is the pull-back along $\pi_X^*$ of a map $\psi_f:\T_f\to \T_f$. 
\item one then proves (along the lines of Lemmas \ref{lemma-phi} and \ref{lem-naturalpsi}) that $\psi_f$ is natural in the following sense: for any morphism commuting square 
$$
\xymatrix{
X' \ar[d]_{x}\ar[r]^{f'}    & Y'    \ar[d]^{y} \\
X  \ar[r]^{f} & Y 
}
$$ one has a commuting square 
$$
\xymatrix{
\T_{f'} \ar[d]\ar[r]^{\psi_{f'}}    & \T_{f'}    \ar[d] \\
x^*\T_f    \ar[r]^{x^*\psi_f} & x^*\T_f 
}
$$
\item the previous step allows one to reduce to the case when both $X$ and $Y$ are affine, by induction on the degree of geometricity of $f$ (as in Corollaries \ref {cor-phi-equiv} and \ref{cor-psi-equiv}). 
\begin{remark}
We say that $f$ is \textit{$m$-geometric} if it can be realized as the colimit $f=|f_*|$ of a simplicial diagram $f_*:X_*\to Y_*$ of morphisms, with both $X_*$ and $Y_*$ being Segal groupoids in $(m-1)$-geometric stacks. 
Every morphism between geometric stacks is $m$-geometric for some $m$. Namely, $Y$ being geometric there exists a Segal groupoid $Y_*$ such that $Y=|Y_*|$ and every $Y_n$ is geometric. 
Then there exists a Segal groupoid $X_*$ presenting $X$ such that $X_n\cong X\times_YY_n$ and a simplicial morphism $f_*$ presenting $f$ such that $f_n$ is equivalent to the second projection (see \cite[\S1.3.5]{HAG-II}).  
Observe that every $X_n$ is geometric, as geometricity is preserved by fiber products. 
\end{remark}
\item one finally proves that the result is true when both $X$ and $Y$ are affine. 
\end{enumerate}
We conclude by using Lemma \ref{lemma-ndnd}: since $\xi$ is non-degenerate then so is $\gamma=\omega_f$. 
\end{proof}

\begin{remark}\label{remark-cool}
We won't do it but one can also prove, using similar methods, that given two morphisms $X_1\to Y$ and $X_2\to Y$ there is an equivalence 
$$
\bfT^*_{X_1}[n]Y\times_{\bfT^*[n]Y}\bfT^*_{X_2}[n]Y\cong \bfT^*[n-1](X_1\times_YX_2)
$$
of $(n-1)$-shifted symplectic stacks. 
\end{remark}

\begin{example}
The example we provide here is just an illustration of our results, as in this case everything can be directly proven in an \textit{ad hoc} manner. 
Let $G$ be an affine algebraic group, $H\subset G$ a closed algebraic subgroup, and let $\mathfrak h\subset\mathfrak g$ be the corresponding inclusion of Lie algebras. 
We then consider the map $f:X=BH\to BG=Y$. 

Theorem \ref{theorem-cotangent} tells us that the $1$-shifted cotangent stack $T^*[1]BG=[\mathfrak g^*/G]$ is $1$-shifted symplectic 
(we refer to \cite[\S1.2.3]{Cal}, \cite[Example 2.10]{Cal2} or \cite[\S2.2]{Saf} where an explicit formula for the $1$-shifted symplectic form is given), and the zero section map 
$BG=[pt/G]\to[\mathfrak g^*/G]=T^*[1]BG$ is Lagrangian. 
Then Theorem \ref{theorem-lagrangian} says that $\ell_f:T_{BH}^*[1]BG=[\mathfrak h^\perp/H]\to[\mathfrak g^*/G]=T^*[1]BG$ is Lagrangian as well. 
It is interesting to not that in the case when $H=1$ then $\mathfrak h^perp=\mathfrak g^*$ and one recovers the known fact that $\mathfrak g^*\to [\mathfrak g^*/G]$ is Lagrangian. 

Applying Remark \ref{remark-cool} to $X_1=BH$, $X_2=pt$ and $Y=BG$ we get a chain of equivalences 
$$
\mathfrak h^\perp\times G/H\cong[\mathfrak h^\perp/H]\times_{[\mathfrak g^*/G]}\mathfrak g^*=\bfT^*_{X_1}[n]Y\times_{\bfT^*[n]Y}\bfT^*_{X_2}[n]Y\cong\bfT^*(X_1\times_YX_2)\cong\bfT^*(G/H)
$$
that recovers the obvious symplectomorphism $\mathfrak h^\perp\times G/H\cong\bfT^*(G/H)$. 
\end{example}

\subsection{Lagrangian structures from closed $1$-forms}

Let $X$ be an Artin stack and let $\alpha_0\in A^1(X,n)$ be a $1$-form of degree $n$ on $X$. It tautologically defines a section $X\overset{\bar\alpha_0}{\longrightarrow}\bfT^*[n]X$ of $\pi_X:\bfT^*[n]X\to X$. 
Our aim in this Subsection is to prove the following generalization of the second assertion in Theorem \ref{theorem-cotangent}: 
\begin{theorem}
Every lift of $\alpha_0$ to a closed form $\alpha\in A^{1,cl}(X,n)$ induces a Lagrangian structure on the morphism $\bar\alpha_0$. 
\end{theorem}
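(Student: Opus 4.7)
The plan is to realize $\bar\alpha_0$ as the composite of the zero section $\iota_X$ with a symplectic automorphism of $\bfT^*[n]X$ coming from translation by $\alpha$, thereby reducing the result to the already-proven Lagrangian property of the zero section (Theorem \ref{theorem-cotangent}(2)). The universal property of $\bfT^*[n]X$ provides a translation map $\tau_\alpha:\bfT^*[n]X\to\bfT^*[n]X$ over $X$, classified by the pair $(\pi_X,\lambda_X+\pi_X^*\alpha_0)$. By construction $\tau_\alpha^*\lambda_X\simeq\lambda_X+\pi_X^*\alpha_0$; the map is invertible with inverse $\tau_{-\alpha_0}$; and $\tau_\alpha\circ\iota_X\simeq\bar\alpha_0$, because $\iota_X^*(\lambda_X+\pi_X^*\alpha_0)\simeq\alpha_0$. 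In particular the tautological identity $\bar\alpha_0^*\lambda_X\simeq\alpha_0$ gives $\bar\alpha_0^*\omega_X\simeq\ddR\alpha_0$ in $A^{2,cl}(X,n)$.

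Applying $\ddR$ to the previous identification yields $\tau_\alpha^*\omega_X\simeq\omega_X+\pi_X^*\ddR\alpha_0$. The crucial step is to upgrade $\tau_\alpha$ to an equivalence of $n$-shifted presymplectic stacks, which amounts to producing a null-homotopy of $\pi_X^*\ddR\alpha_0$ in $A^{2,cl}(\bfT^*[n]X,n)$. The composition $A^{1,cl}(X,n)\to A^1(X,n)\overset{\ddR}{\longrightarrow}A^{2,cl}(X,n)$ in fact carries a canonical null-homotopy: given a closed lift $\alpha=(\alpha_0,\alpha_1,\ldots)$, the closed $2$-form $\ddR\alpha_0$ has underlying sequence $(\epsilon\alpha_0,0,\ldots)$ and is witnessed as zero by $(\alpha_1,\alpha_2,\ldots)$ via the closedness relations $\epsilon\alpha_i=d\alpha_{i+1}$. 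Pulling this null-homotopy back along $\pi_X$ gives the desired homotopy $\tau_\alpha^*\omega_X\simeq\omega_X$, turning $\tau_\alpha$ into an equivalence of $n$-shifted symplectic stacks.

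Transporting the Lagrangian structure $\gamma_X$ on $\iota_X$ from Theorem \ref{theorem-cotangent}(2) along this equivalence then furnishes a Lagrangian structure on $\bar\alpha_0\simeq\tau_\alpha\circ\iota_X$: the isotropic structure arises by composition, and non-degeneracy is preserved because $d\tau_\alpha$ is an equivalence of tangent sequences intertwining the shifted pairings. One then checks that this transported structure agrees with the one built directly from the chain $\bar\alpha_0^*\omega_X\simeq\ddR\alpha_0\simeq 0$, which is forced by the universal property since both are assembled from the same datum $\alpha$. The main obstacle is the coherent construction of the previous paragraph: one must produce the null-homotopy of $\ddR\alpha_0$ at the level of the $\infty$-category $\epsilon\text{-}\mathbf{dg}_{\mathbf{k}}^{gr}$ rather than just on leading terms, and verify that it genuinely promotes $\tau_\alpha$ to an equivalence of presymplectic stacks in the sense of Section \ref{sec-presymp}. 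Everything else is either a formal consequence of universal properties or a direct appeal to the zero-section case already established in the paper.
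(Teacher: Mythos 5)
Your proposal is correct, and it takes a genuinely different route from the paper for the non-degeneracy part. The key lemma is the same in both arguments: the composite $A^{1,cl}(X,n)\to A^{1}(X,n)\overset{\ddR}{\to}A^{2,cl}(X,n)$ is canonically null-homotopic, so the closedness data $(\alpha_1,\alpha_2,\dots)$ of a lift $\alpha$ witnesses $\ddR\alpha_0\simeq 0$ in $A^{2,cl}(X,n)$ (this is exactly Lemma \ref{lemma-keys} in the paper). The paper uses this null-homotopy together with $\bar\alpha_0^*\omega_X\simeq\ddR\alpha_0$ to build the isotropic structure directly on $\bar\alpha_0$, and then establishes non-degeneracy by rerunning the naturality-plus-induction-on-geometricity argument of \S\ref{subsubiotand} (it only says the proof is ``almost identical'' to the zero-section case). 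You instead pull the null-homotopy back along $\pi_X$ to promote the fiberwise translation $\tau_\alpha$ to a self-equivalence of $(\bfT^*[n]X,\omega_X)$ as a presymplectic stack, write $\bar\alpha_0\simeq\tau_\alpha\circ\iota_X$, and transport the Lagrangian structure of Theorem \ref{theorem-cotangent}(2); non-degeneracy then comes for free from the equivalence $d\tau_\alpha$ of tangent sequences rather than from a second induction. This is a clean reduction and arguably sharper than the paper's sketch, at the cost of having to construct $\tau_\alpha$ and its compatibilities ($\tau_\alpha^*\lambda_X\simeq\lambda_X+\pi_X^*\alpha_0$, invertibility, $\tau_\alpha\circ\iota_X\simeq\bar\alpha_0$) coherently from the universal property of $\bfT^*[n]X$ --- which you correctly identify as the main thing to be checked, and which does go through since all these maps are classified by sections of $\pi_X^*\L_X[n]$ and their sums. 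One small caveat: the classifying datum of $\tau_\alpha$ should be the tautological section of $\pi_X^*\L_X[n]$ plus $\pi_X^*\alpha_0$, rather than the $1$-form $\lambda_X\in\Gamma(\L_{\bfT^*[n]X}[n])$ itself; this is the same abuse of notation the paper makes and does not affect the argument. Also note that the final compatibility check (that the transported structure agrees with the one built from $\bar\alpha_0^*\omega_X\simeq\ddR\alpha_0\simeq 0$) is not needed for the statement of the theorem, only for identifying the two constructions.
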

\begin{proof}[Sketch of proof of the Theorem]
By definition of the tautological $1$-form $\lambda_X$ of degree $n$ on $\bfT^*[n]X$, we have that $\bar\alpha_0^*\lambda_X=\alpha_0$. 
Let us denote $Keys(\alpha_0)$ the space of lifts of $\alpha_0$ to $A^{1,cl}(X,n)$: we have the Cartesian square
$$
\xymatrix{Keys(\alpha_0) \ar[r]\ar[d] & A^{1,cl}(X,n) \ar[d] \\
\mathrm{*} \ar[r]^{\alpha_0} & A^1(X,n)\,.
}$$
\begin{lemma}\label{lemma-keys}
There is a map from $Keys(\alpha_0)$ to space $Isot(\bar\alpha_0,\omega_X)$ of isotropic structures on $\bar\alpha_0$. 
\end{lemma}
\begin{proof}[Proof of the Lemma]
Observe that for any graded mixed complex $(E,\epsilon)$ the sequence 
$$
Map_{\epsilon-\mathbf{dg}_{\mathbf{k}}^{gr}}(\mathbf{k},E)\overset{(-)^\sharp}{\longrightarrow} Map_{\mathbf{dg}_{\mathbf{k}}^{gr}}(\mathbf{k},E^\sharp)\overset{\epsilon}{\longrightarrow} Map_{\epsilon-\mathbf{dg}_{\mathbf{k}}^{gr}}(\mathbf{k},E[1](1))
$$
is null-homotopic. 
Applying this to $E=DR(X)[n+1](1)$ we get the null-homotopic sequence 
$$
A^{1,cl}(X,n)\overset{(-)^\sharp}{\longrightarrow} A^{1}(X,n)\overset{d_{dR}}{\longrightarrow} A^{2,cl}(X,n)\,,
$$
and we therefore have commuting squares 
$$
\xymatrix{
A^{1,cl}(X,n) \ar[d]\ar[r]^{(-)^\sharp} & A^{1}(X,n) \ar[d]^{d_{dR}} & \mathrm{*} \ar[l]_{~~~~~\alpha_0}\ar[d] \\
\mathrm{*}\ar[r]^{0} & A^{2,cl}(X,n) & \mathrm{*}\ar[l]^{~~~~~d_{dR}\alpha_0}\,.
}
$$
Hence $Keys(\alpha_0)$ maps to the space of paths between $0$ and $d_{dR}\alpha_0\cong \bar\alpha_0^*\omega_X$ in $A^{2,cl}(X,n)$. 
\end{proof}
\noindent It remains to prove that the map $Keys(\alpha_0)\to Isot(\bar\alpha_0,\omega_X)$ from Lemma \ref{lemma-keys} takes its values in the subspace of non-degenerate ones. 
The proof is almost identical to the case of the zero section $\iota_X$ (see \S\ref{subsubiotand}). 
\end{proof}

\section{Shifted cotangent stacks in derived Poisson geometry}

In this Section we provide several conjectural relations between shifted cotangent stacks and shifted Poisson structures (\cite{CPT+,Prid}). 

\subsection{The $n$-shifted Poisson structure of $\bfT^*[n]X$ and shifted polyvector fields}

On the one hand we know from \cite{CPT+,Prid} that the spaces of $n$-shifted symplectic structures and of non-degenerate $n$-shifted Poisson structures 
(we refer to \textit{loc. cit.} for a precise definition, and follow the notation from \cite{CPT+}) on a given Artin stack are equivalent. 
We therefore obtain an $n$-shifted Poisson structure on $\bfT^*[n]X$, and thus in particular a $\mathbb{P}_{n+1}$-algebra structure extending the natural unbounded cdga structure of 
$$
\Gamma\big(\bfT^*[n]X,\mathcal O_{\bfT^*[n]X}\big)=\Gamma\big(X,\mathrm{Sym}(\T_X[-n])\big)\,.
$$
Moreover, recall that $\bfT^*[n]X$ carries a $\Gm$-action, which makes $\Gamma\big(X,\mathrm{Sym}(\T_X[-n])\big)$ a commutative algebra object in graded complexes (the weight being obviously given by the symmetric degree). 
We have seen that the $n$-shifted symplectic structure on $\bfT^*[n]X$ is homogeneous of weight $1$ for this $\Gm$-action. 

\medskip

One can carry over the $\Gm$-action along \cite{CPT+} (this ends up working relatively to $B\Gm$, and the constructions from \textit{loc. cit.} do extend to the relative setting) and get 
a $\mathbb{P}_{n+1}^{gr}$-algebra structure on $\Gamma\big(X,\mathrm{Sym}(\T_X[-n])\big)$. Recall that $\mathbb{P}_{n+1}^{gr}$ for a graded version of $\mathbb{P}_{n+1}$ where the Poisson bracket generator has weight $-1$. 

\medskip

We believe the following is true: 
\begin{Conj}\label{conjecture-poisson}
The above $\mathbb{P}_{n+1}^{gr}$-algebra structure on $\Gamma\big(X,S(\T_X[-n])\big)$ is equivalent to the $\mathbb{P}_{n+1}^{gr}$-algebra 
$\mathbf{Pol}(X,n)$ of $n$-sifted polyvector fields from \cite{CPT+}. 
\end{Conj}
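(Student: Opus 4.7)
The plan is to reduce to the affine case via smooth descent and handle the affine case by an explicit computation, in the same spirit as the proofs of Theorems \ref{theorem-cotangent} and \ref{theorem-lagrangian}.

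Observe first that both $\mathbb{P}_{n+1}^{gr}$-algebras have the same underlying graded cdga $\Gamma(X,\mathrm{Sym}(\T_X[-n]))$, the $\Gm$-weight being the symmetric power. The comparison therefore amounts to identifying two $\mathbb{P}_{n+1}^{gr}$-brackets of weight $-1$. By the graded Leibniz rule and $\Gm$-equivariance, such a bracket is determined (up to coherent homotopy) by its restriction to the weight-$1$ generators $\T_X[-n]$: the bracket of a generator with a weight-$0$ element gives a map $\T_X \otimes \mathcal O_X \to \mathcal O_X$, and the bracket of two generators gives a map $\T_X \otimes \T_X \to \T_X$. On the Schouten side these are the standard derivation action and the Lie bracket of vector fields; the content of the conjecture is that the symplectic bracket matches.

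For the affine case $X=\mathrm{Spec}(A)$ with $A$ a cofibrant cdga, I would not try to chase the abstract symplectic-to-Poisson equivalence of \cite{CPT+,Prid} directly. Instead, I would exhibit the Schouten--Nijenhuis $\mathbb{P}_{n+1}^{gr}$-structure on $\mathrm{Sym}_A(\T_A[-n])$ as a non-degenerate graded $\mathbb{P}_{n+1}^{gr}$-structure, compute its image under the (more tractable) Poisson-to-symplectic direction, and verify that this image coincides with $\omega_X = d_{dR}\lambda_X$ as identified in \S\ref{sec-symplcotan}. On generators this check is transparent: the duality pairing inside the Schouten bracket matches the component of $\omega_X$ pairing vertical and horizontal tangent directions, and the derivation-commutator matches the remaining component. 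Since the Poisson-to-symplectic map is an equivalence on non-degenerate objects, this forces the two $\mathbb{P}_{n+1}^{gr}$-structures on $\mathrm{Sym}_A(\T_A[-n])$ to agree.

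To globalize, the inductive strategy used in Corollaries \ref{cor-phi-equiv} and \ref{cor-psi-equiv} applies. If $Y=|G_\bullet|$ is a smooth Segal groupoid in $m$-Artin stacks, both $\mathbf{Pol}(Y,n)$ and the Poisson structure on $\bfT^*[n]Y$ are recovered by descent from the corresponding data on $G_\bullet$, using that $\mathbf{Pol}(-,n)$ satisfies smooth descent (cf.~\cite{CPT+}) and that the $n$-shifted symplectic structure on $\bfT^*[n](-)$ is natural with respect to smooth maps (from the very construction of $\lambda_X$). Running the induction on $m$ as before extends the conjecture from affines to all Artin stacks. The main obstacle I foresee is the affine step: rigorously translating the Schouten bracket into the graded mixed $L_\infty$-model underlying the definition of $\mathbf{Pol}(-,n)$ in \cite{CPT+}, and verifying non-degeneracy of the Schouten bivector at that level of strictness, is a nontrivial operadic unpacking where most of the technical work will sit.
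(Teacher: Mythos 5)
First, a point of comparison: the paper offers no proof of this statement. It is Conjecture \ref{conjecture-poisson}, explicitly prefaced by ``we believe the following is true'', so there is no argument of record to measure yours against; I can only assess the proposal on its own terms. Its overall shape is reasonable --- in particular, running the comparison in the Poisson-to-symplectic direction (exhibit the Schouten structure on $\mathrm{Sym}_A(\T_A[-n])$ as non-degenerate, show its associated closed $2$-form is $\ddR\lambda_X$, and invoke the equivalence on non-degenerate objects) is the natural strategy. But the two load-bearing steps conceal the actual difficulties rather than resolving them. The more serious one is globalization. The inductions of Corollaries \ref{cor-phi-equiv} and \ref{cor-psi-equiv} work because $\phi_X$ and $\psi_X$ are \emph{morphisms of perfect complexes}: they glue by quasi-coherent descent ($QCoh(|G_\bullet|)\cong \lim_n QCoh(G_n)$) and one concludes by conservativity of $q^*$. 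A $\mathbb{P}_{n+1}^{gr}$-bracket on $\Gamma(X,\mathrm{Sym}(\T_X[-n]))$ is not a morphism of quasi-coherent sheaves; it is a bidifferential operator, and polyvector fields equipped with the Schouten bracket are precisely \emph{not} functorial along smooth morphisms (only along formally \'etale ones). This failure of naive smooth descent for $\mathbf{Pol}(-,n)$ is the very reason \cite{CPT+} develops formal localization to define it on Artin stacks at all, so ``$\mathbf{Pol}(-,n)$ satisfies smooth descent (cf.~\cite{CPT+})'' is not something you can cite; the induction on geometricity does not transfer. Relatedly, the symplectic side is only natural in $X$ via the Lagrangian correspondences $\bfT^*[n]X\leftarrow f^*\bfT^*[n]Y\to\bfT^*[n]Y$, not via maps of cotangent stacks, so even formulating the naturality your induction needs is nontrivial.

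The second gap is in the affine step itself. Reducing a \emph{strict} biderivation on the free graded-commutative algebra $\mathrm{Sym}_A(\T_A[-n])$ to its values on generators is fine, but the $\mathbb{P}_{n+1}$-structure attached to $\omega_X$ by \cite{CPT+,Prid} is only homotopy-coherent (a Maurer--Cartan element in a completed graded dg Lie algebra), and the compatibility between a Poisson structure and a closed $2$-form is itself a homotopy-coherent datum involving \emph{all} the higher components $\omega_1,\omega_2,\dots$ of the closed form $\ddR\lambda_X$, not an identity between leading terms. Matching the leading term of the pairing on generators therefore does not ``force the two $\mathbb{P}_{n+1}^{gr}$-structures to agree''; you must construct the full compatibility homotopy between the strict Schouten structure and $\ddR\lambda_X$ and verify non-degeneracy at that level. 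You honestly flag this as where the work sits, but as written the proposal defers exactly the two points on which the conjecture's difficulty rests, which is why it remains a conjecture in the paper.
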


\subsection{Lagrangian morphisms, deformed shifted cotangent stacks and shifted Poisson structures}

Let $f:X\to Y$ be a morphism of Artin stacks equipped with an $n$-shifted Lagrangian structure $\gamma$ (in particular, $Y$ implicitly carries an $n$-shifted symplectic structure $\omega$). 
Note that the leading term $\gamma_0$ of $\gamma$ provides us with an equivalence $f^*\T_f[1]\to \L_X[n]$, which leads to an equivalence of derived stacks $\bfT_XY:=\mathbb{A}(f^*\T_f[1])\to\bfT^*[n]$. 

\medskip

We then consider the formal completion $\widehat{Y}_f=X_{DR}\times{Y_{DR}}Y$ of $Y$ along $f$. The morphism $f$ factors as $X\overset{\widehat{f}}{\to}\widehat{Y}_f\to Y$, and one can easily see that 
\begin{itemize}
\item the pull-back $\widehat{\omega}$ of $\omega$ along $\widehat{Y}_f\to Y$ is still non-degenerate, thus defining an $n$-shifted symplectic structure on $\widehat{Y}_f$. 
\item the homotopy $\gamma$ between $f^*\omega=\widehat{f}^*\widehat{\omega}$ and $0$, seen as an isotropic structure on $\widehat{f}$, is still non-degenerate, thus defining an $n$-shifted Lagrangian structure on $\widehat{f}$. 
\end{itemize}
As a matter of notation, if $E$ is a perfect module over $X$ then we simply write $\widehat{\mathbf{E}}$ for the formal completion of $\mathbf{E}$ along the zero section. 
It follows from \cite[Chapter IV.5]{GR} that there actually exists a morphism $\widetilde{f}:X\times[\mathbb{A}^1/\Gm]\to \widetilde{Y}_f$ of derived stacks over $[\mathbb{A}^1/\Gm]$ such that 
\begin{itemize}
\item the fiber at $0$ is the zero section $X\to \widehat{\bfT}_XY\cong\widehat{\bfT}^*[n]X$. 
\item the fiber at $1$ is $\widehat{f}:X\to \widehat{Y}_f$. 
\end{itemize}
As a consequence we get that there is a filtration on the unbounded cdga $\Gamma(\widehat{Y}_f,\mathcal O_{\widehat{Y}_f})$ with associated graded being 
$\Gamma\big(\bfT^*[n]X,\mathcal O_{\bfT^*[n]X}\big)=\Gamma\big(X,\mathrm{Sym}(\T_X[-n])\big)$. In other words, we get that $\Gamma\big(X,\mathrm{Sym}(\T_X[-n])\big)$ becomes equipped with a mixed differential. 
\begin{remark}
Note that there is a much easier way to see this. Thanks to the equivalence $f^*\T_f[1]\to \L_X[n]$ provided by $\gamma_0$, we get an equivalence 
$$
\Gamma\big(X,\mathrm{Sym}(\T_X[-n])\big)\to \Gamma\big(X,\mathrm{Sym}(f^*\L_f[-1])\big)\cong\mathbf{DR}(X/Y)^\sharp
$$
of commutative algebra objects in graded complexes. The mixed differential we get on $\Gamma\big(X,\mathrm{Sym}(\T_X[-n])\big)$ then comes from the mixed differential on $\mathbf{DR}(X/Y)$. 
\end{remark}

We believe the following is true: 
\begin{Conj}\label{conjecture-filtered}
\textbf{a)}~There exists a relative $n$-shifted symplectic structure $\widetilde{\omega}$ on $\widetilde{Y}_f$ over $[\mathbb{A}^1/\Gm]$ such that $1^*\widetilde{\omega}\cong\widehat{\omega}$ and 
$0^*\widetilde{\omega}\cong\omega_X$. \\
\textbf{b)}~There exists a relative Lagrangian structure $\widetilde{\gamma}$ on $\widetilde{f}$ (over $[\mathbb{A}^1/\Gm]$) such that $1^*\widetilde{\gamma}\cong\gamma$ and $0^*\widetilde{\gamma}\cong\gamma_X$. 
\end{Conj}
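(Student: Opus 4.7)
The plan is to treat the whole statement as a deformation-theoretic refinement of what was already proved for the zero section in Theorem \ref{theorem-cotangent}. The family $\widetilde{Y}_f\to[\mathbb{A}^1/\Gm]$ comes from \cite{GR}, where it is realized as the deformation to the normal cone/formal neighborhood of $\widehat{f}$, with $\mathbb{G}_m$ acting by dilation on the $\mathbb{A}^1$ parameter. Quasi-coherent sheaves on a stack over $[\mathbb{A}^1/\Gm]$ correspond to filtered objects whose underlying graded is obtained at $0$ and whose ``fiber at $1$'' is obtained after un-filtering. So the conjecture really amounts to producing a filtration on $\widehat{\omega}$ whose associated graded matches the tautological shifted symplectic form $\omega_X$, and similarly for the Lagrangian data.

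First, I would invoke the sheaf-theoretic reformulation of closed forms from Remark \ref{rem-sheafDR}: working relative to $[\mathbb{A}^1/\Gm]$, one has a $\mathbb{G}_m$-equivariant stack $\mathcal{DR}(\widetilde{Y}_f/[\mathbb{A}^1/\Gm])$ of graded mixed $\mathcal O$-modules, and a relative closed $2$-form of degree $n$ is an appropriate map out of $\mathbf{k}$. The form $\widehat{\omega}$ on $\widehat{Y}_f$ lives naturally in $A^{2,cl}(\widehat{Y}_f,n)$, and the Lagrangian datum $\gamma$ (on the nose equal to $\widehat{f}^*\widehat{\omega}$ up to the given homotopy) precisely provides the trivialization of its restriction along $\widehat{f}$ needed to promote it to the formal/filtered setting. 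Concretely, on Hodge-completed de Rham complexes, the filtration by powers of the ideal of $X\subset \widehat{Y}_f$ is $\Gm$-equivariant, which is exactly the datum defining a relative form over $[\mathbb{A}^1/\Gm]$. Taking the underlying $2$-form at weight $0$ returns the leading term, whose value at $t=0$ on $\widehat{\bfT}^*[n]X$ is the canonical pairing between $\pi_X^*\T_X$ and $\pi_X^*\L_X[n]$, i.e. $\omega_X$, while the value at $t=1$ is $\widehat{\omega}$ by construction.

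For non-degeneracy, I would argue that it suffices to check it on the associated graded (the fiber at $0$), since being an equivalence of perfect complexes is detected after pulling back along the faithfully flat quotient map $\mathbb{A}^1\to[\mathbb{A}^1/\Gm]$ and is closed under specialization for a family of maps between perfect complexes with constant rank; alternatively, one uses that over a point of $[\mathbb{A}^1/\Gm]$ distinct from $0$ we get $\widehat{\omega}$, which is non-degenerate since $\omega$ is, and that non-degeneracy over $0$ is exactly Theorem \ref{theorem-cotangent}. For part (b), the relative Lagrangian structure $\widetilde{\gamma}$ is built in the same way: one exhibits an equivariant filtration on the trivialization $\gamma$ of $\widehat{f}^*\widehat{\omega}$ compatible with the filtration chosen for $\widetilde{\omega}$. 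The associated graded of this filtered trivialization specializes at $t=0$ to the tautological isotropic structure $\gamma_X$ of Section \ref{sec-symplcotan}, using the equivalence $\bfT_XY\simeq \bfT^*[n]X$ provided by $\gamma_0$. Non-degeneracy of $\widetilde{\gamma}$ is inherited from non-degeneracy of its specializations at $0$ and $1$, via Lemma \ref{lemma-nd} and the same pointwise argument.

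The hard part will be showing that the deformation to the normal cone of \cite{GR} is genuinely compatible with the graded mixed structures underlying shifted closed forms, i.e. that the de Rham stack $\mathcal{DR}$ transforms the Rees family into the Rees family of the Hodge-type filtration on $\mathbf{DR}(\widehat{Y}_f)$. This is essentially a functoriality statement for $\mathbf{DR}$ with respect to formal thickenings and dilation, and it is where one must really use that everything is $\Gm$-equivariant with the correct weights for forms and for the fiber coordinate. Once this interface is set up, the rest of the argument is a direct translation of the proof of Theorem \ref{theorem-cotangent} and of the Lagrangian-structure argument of Section \ref{subsubiotand} to the relative setting over $[\mathbb{A}^1/\Gm]$.
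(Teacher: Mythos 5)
You should first note that the statement you are addressing is presented in the paper as Conjecture \ref{conjecture-filtered}: the paper offers no proof of it, so the only question is whether your sketch actually closes the gap, and it does not. The decisive step --- that the Gaitsgory--Rozenblyum deformation $\widetilde{Y}_f\to[\mathbb{A}^1/\Gm]$ is compatible with the graded mixed structures underlying closed forms, i.e.\ that $\mathbf{DR}$ carries the Rees family to a filtered enhancement of $\mathbf{DR}(\widehat{Y}_f)$ with associated graded $\mathbf{DR}\big(\widehat{\bfT}^*[n]X\big)$, together with a lift of the \emph{closed} form $\widehat{\omega}$ (not merely its underlying $2$-form) to a point of $A^{2,cl}\big(\widetilde{Y}_f/[\mathbb{A}^1/\Gm],n\big)$ --- is precisely the content of the conjecture, and you explicitly set it aside as ``the hard part.'' Producing a $\Gm$-equivariant filtration of $\mathcal O_{\widehat{Y}_f}$ by powers of the ideal of $X$ gives the family of stacks, but a relative closed $2$-form is an infinite sequence of coherence data $(\omega_0,\omega_1,\dots)$ intertwined by the mixed differential, and nothing in your text produces a compatible filtration of all of it, nor the required identifications of its fibers at $0$ and $1$. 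So your proposal identifies the difficulty rather than resolving it.

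Two further points would need attention even granting that interface. First, $\widehat{Y}_f$ is a formal completion and hence not an Artin stack locally of finite presentation, so the non-degeneracy condition (which in this paper presupposes a perfect cotangent complex) must be reformulated before one can even state what a relative $n$-shifted symplectic structure on $\widetilde{Y}_f$ over $[\mathbb{A}^1/\Gm]$ means; your appeal to ``a family of maps between perfect complexes'' presupposes exactly this. Second, your identification of the fiber at $0$ of the putative $\widetilde{\gamma}$ with $\gamma_X$ relies only on the equivalence $\bfT_XY\simeq\bfT^*[n]X$ induced by the leading term $\gamma_0$, whereas matching the full closed isotropic structure requires controlling all higher components --- the same coherence problem again. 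Your openness argument for non-degeneracy (a closed $\Gm$-invariant subset of $\mathbb{A}^1$ avoiding $0$ is empty) is sound once the objects are defined, but it is the least of the difficulties here.
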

Assuming the above conjecture and working out the constructions from \cite{CPT+} relatively to $[\mathbb{A}^1/\Gm]$ we get that the mixed differential on $\Gamma\big(X,\mathrm{Sym}(\T_X[-n])\big)$ is compatible 
with the $\mathbb{P}_{n+1}^{gr}$-algebra. 
We believe that the following is also true: 
\begin{Conj}\label{conjecture-MC}
The above mixed differential is given by the bracket with a weight $2$ Maurer--Cartan element $\pi_f$ in the graded dg Lie algebra $\Gamma\big(X,\mathrm{Sym}(\T_X[-n])\big)[n]$. 
\end{Conj}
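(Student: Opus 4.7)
The plan is to derive the statement as a formal consequence of Conjectures \ref{conjecture-poisson} and \ref{conjecture-filtered}, combined with the standard deformation-theoretic interpretation of Maurer--Cartan elements as infinitesimal deformations of $\mathbb{P}_{n+1}^{gr}$-algebras. The strategy is to promote everything to a family over $[\mathbb{A}^1/\Gm]$ and then read off the relevant infinitesimal.

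First, assuming Conjecture \ref{conjecture-filtered}, the family $\widetilde{Y}_f\to[\mathbb{A}^1/\Gm]$ carries a relative $n$-shifted symplectic structure $\widetilde{\omega}$. The equivalence between (non-degenerate) $n$-shifted Poisson structures and $n$-shifted symplectic structures from \cite{CPT+,Prid} ought to extend over a stack base such as $[\mathbb{A}^1/\Gm]$, yielding a relative $n$-shifted Poisson structure on $\widetilde{Y}_f$. Taking global sections produces a $\mathbb{P}_{n+1}^{gr}$-algebra structure on $\Gamma\big(\widetilde{Y}_f,\mathcal O_{\widetilde{Y}_f}\big)$, linear over the base algebra controlling $[\mathbb{A}^1/\Gm]$. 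By construction its fiber at $0$ is the Poisson $\mathbb{P}_{n+1}^{gr}$-structure on $\Gamma\big(X,\mathrm{Sym}(\T_X[-n])\big)=\mathbf{Pol}(X,n)$ of Conjecture \ref{conjecture-poisson}, while its fiber at $1$ is the one corresponding to $\mathbf{DR}(X/Y)^\sharp$ via the Lagrangian structure $\gamma$.

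Second, a $\Gm$-equivariant family over $\mathbb{A}^1$ of $\mathbb{P}_{n+1}^{gr}$-algebras whose fiber at $0$ is the trivial Poisson algebra $\mathbf{Pol}(X,n)$ is precisely the datum of a filtered compatible mixed differential $\epsilon$ on the graded $\mathbb{P}_{n+1}^{gr}$-algebra $\mathbf{Pol}(X,n)$. By the classical correspondence between such deformations and Maurer--Cartan elements, $\epsilon$ must be of the form $[\pi_f,-]$ for a unique (up to homotopy) element $\pi_f$ in the shifted dg Lie algebra $\mathbf{Pol}(X,n)[n]=\Gamma\big(X,\mathrm{Sym}(\T_X[-n])\big)[n]$, satisfying the Maurer--Cartan equation $d_X\pi_f+\tfrac12[\pi_f,\pi_f]=0$. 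The weight constraint is then a matter of bookkeeping: the mixed differential has weight $1$ for the fiber grading, and the Poisson bracket generator of $\mathbb{P}_{n+1}^{gr}$ has weight $-1$, so $|\pi_f|=2$ as required.

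The main obstacle will be the first step: one needs the Poisson/symplectic correspondence of \cite{CPT+,Prid} not merely as an equivalence of spaces of structures on a fixed stack, but as an equivalence of families over a parameter stack (here $[\mathbb{A}^1/\Gm]$) with its non-trivial $\Gm$-action. One expects this to follow from the $\infty$-functoriality of the constructions in \emph{loc.\,cit.}, but it requires a careful revisitation of the arguments in a relative setting, together with a check that the Tate realisation / graded mixed formalism behaves well when relativised. A secondary and more concrete task, useful for applications, is to identify $\pi_f$ explicitly: its leading $\mathrm{Sym}^2$-component should be directly computable from the equivalence $f^*\T_f[1]\simeq\L_X[n]$ provided by $\gamma_0$, which governs the weight-$2$ part of the deformation.
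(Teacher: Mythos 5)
The statement you are addressing is stated in the paper as Conjecture \ref{conjecture-MC}; the paper offers no proof of it, so there is nothing to compare your argument against except the surrounding discussion. That discussion already contains the substance of your first step: assuming Conjecture \ref{conjecture-filtered} and relativising the constructions of \cite{CPT+} over $[\mathbb{A}^1/\Gm]$, the paper itself deduces that the mixed differential on $\Gamma\big(X,\mathrm{Sym}(\T_X[-n])\big)$ is \emph{compatible} with the $\mathbb{P}_{n+1}^{gr}$-algebra structure. Your weight bookkeeping (bracket generator of weight $-1$, mixed differential of weight $+1$, hence $\pi_f$ of weight $2$) is also consistent with the paper's conventions. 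So up to that point you are reproducing the set-up, not adding to it.

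The genuine gap is in your second step. You invoke a ``classical correspondence between such deformations and Maurer--Cartan elements'' to conclude that the compatible mixed differential $\epsilon$ \emph{must} be of the form $[\pi_f,-]$. But Maurer--Cartan elements of $\Gamma\big(X,\mathrm{Sym}(\T_X[-n])\big)[n]$ only parametrise \emph{inner} derivations; a square-zero biderivation of a $\mathbb{P}_{n+1}^{gr}$-algebra $A$, compatible with both the product and the bracket, need not be inner, the obstruction living in the cohomology of the cone of the adjoint map $A[n]\to\mathrm{Der}(A)$. Showing that this particular derivation is inner --- equivalently, producing $\pi_f$ and identifying $[\pi_f,-]$ with the transport of the de Rham differential of $\mathbf{DR}(X/Y)$ under the equivalence $\Gamma\big(X,\mathrm{Sym}(\T_X[-n])\big)\simeq\mathbf{DR}(X/Y)^\sharp$ induced by $\gamma_0$ --- is precisely the content of the conjecture, and your argument assumes it rather than proves it. A real proof would have to either construct $\pi_f$ directly (for instance as the weight-$2$ component you mention at the end, followed by a verification of the Maurer--Cartan equation and of the identification of the \emph{full} differential, not merely its leading $\mathrm{Sym}^2$ part) or establish a vanishing result for outer derivations of $\mathbf{Pol}(X,n)$ in the relevant weight and cohomological degree, neither of which is carried out here.
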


Putting together Conjectures \ref{conjecture-poisson}, \ref{conjecture-filtered} and \ref{conjecture-MC} we obtain a map from $n$-shifted Lagrangian morphisms $f:X\to Y$ to $(n-1)$-shifted Poisson structures $\pi$ on $X$. 
We don't expect this map $f\mapsto\pi_f$ to be an equivalence as the construction only depends on $\widehat{f}$ (namely, $\pi_f=\pi_{\widehat{f}}$). But we expect the following 
\begin{Conj}
The map $f\mapsto\pi_f$ provides an equivalence between: 
\begin{itemize}
\item the space of $n$-shifted Lagrangian morphisms $f:X\to Y$ such that $\widehat{Y}_f\to Y$ is an equivalence. 
\item the space of $(n-1)$-shifted Poisson structures on $X$ in the sense of \cite{CPT+}.  
\end{itemize}
\end{Conj}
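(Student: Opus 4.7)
The plan is to construct an explicit quasi-inverse $\Phi:\pi\mapsto (f_\pi:X\to Y_\pi)$ and show that $\Phi$ and $f\mapsto\pi_f$ are mutually inverse equivalences. The heuristic is that, at the base points on each side (the zero Poisson structure, respectively the zero section $\iota_X:X\to \bfT^*[n]X$), both spaces should be controlled by the same formal moduli problem, namely deformations governed by a suitably weight-graded piece of $\mathbf{Pol}(X,n-1)[n]$.

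\textbf{Construction of $\Phi$ and the composition $\Phi\circ(f\mapsto\pi_f)$.} Given an $(n-1)$-shifted Poisson structure $\pi$ on $X$, the main theorem of \cite{CPT+} together with Conjecture \ref{conjecture-poisson} lets us regard $\pi$ as a weight-$2$ Maurer--Cartan element in the graded dg Lie algebra $\Gamma(X,\mathrm{Sym}(\T_X[-n]))[n]$. Bracketing with $\pi$ endows this commutative algebra in graded complexes with a compatible mixed differential, hence a $[\mathbb{A}^1/\Gm]$-family of unbounded cdgas deforming $\Gamma(\bfT^*[n]X,\mathcal{O})$. Taking $\mathbb{R}\mathrm{Spec}_X$ yields a $[\mathbb{A}^1/\Gm]$-family of formal derived stacks whose generic fiber $Y_\pi$ comes equipped with a canonical section $f_\pi:X\to Y_\pi$ deforming $\iota_X$. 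Running Conjectures \ref{conjecture-filtered} and \ref{conjecture-MC} in reverse promotes this family to a relative $n$-shifted symplectic structure together with a relative Lagrangian structure, specializing at $1$ to the desired $(Y_\pi,f_\pi)$. Since $Y_\pi$ is by construction formally concentrated along $X$, we automatically have $\widehat{(Y_\pi)}_{f_\pi}\simeq Y_\pi$, so $\Phi(\pi)$ lands in the prescribed subspace. The composition $\pi\mapsto f_\pi\mapsto \pi_{f_\pi}$ returns $\pi$ essentially tautologically: the filtered cdga associated to $f_\pi$ by Conjecture \ref{conjecture-filtered} is the one used to build $Y_\pi$, and extracting its MC element via Conjecture \ref{conjecture-MC} yields back $\pi$.

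\textbf{The other composition and the main obstacle.} For $f\mapsto\pi_f\mapsto f_{\pi_f}\simeq f$: the assumption $\widehat{Y}_f\simeq Y$ means that $Y$ is completely captured by the formal $[\mathbb{A}^1/\Gm]$-family of Conjecture \ref{conjecture-filtered} connecting $\bfT^*[n]X$ to $Y$; this family is reconstructed from $\pi_f$ via Conjecture \ref{conjecture-MC}; and $\Phi(\pi_f)$ then recovers $Y$ together with all of its structures. The hard part is not the manipulation of the compositions but the non-obvious direction embedded in Step $\Phi$: that a formal weight-$2$ MC element in polyvectors \emph{produces}, rather than merely accompanies, an $n$-shifted symplectic structure with a Lagrangian morphism from $X$. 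This is a genuine shifted ``formal symplectic realization'' statement and amounts to showing that the formal moduli problem of Lagrangian deformations of $\iota_X:X\to\bfT^*[n]X$ (into formal $n$-shifted symplectic Artin stacks $Y$ with $\widehat{Y}_f\simeq Y$) is Koszul dual, in the sense of \cite{CPT+}, to the dg Lie algebra of $n$-shifted polyvectors restricted to weights $\geq 2$. Handling this identification with full $\Gm$-equivariant bookkeeping and in the Artin (not just DM) setting is precisely the technical step that prevents upgrading the statement from a conjecture to a theorem.
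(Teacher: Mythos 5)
The statement you are trying to prove is stated in the paper as a \emph{conjecture}: the paper offers no proof of it, and it is moreover conditional on Conjectures \ref{conjecture-poisson}, \ref{conjecture-filtered} and \ref{conjecture-MC}, none of which is established either. So there is no ``paper proof'' to match; the only question is whether your argument actually closes the gap, and it does not. Your own text concedes this: the central step of your strategy --- that a weight-$2$ Maurer--Cartan element in $\Gamma\big(X,\mathrm{Sym}(\T_X[-n])\big)[n]$ \emph{produces} a formal $n$-shifted symplectic stack $Y_\pi$ together with a Lagrangian morphism $f_\pi:X\to Y_\pi$ --- is exactly the ``formal symplectic realization'' statement you flag at the end as the obstruction to a theorem. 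A proof cannot contain, as its key step, an acknowledgment that the key step is open. Concretely, the gaps are: (i) Conjectures \ref{conjecture-filtered} and \ref{conjecture-MC} are one-directional (they extract a mixed differential, resp.\ an MC element, \emph{from} a given Lagrangian $f$); ``running them in reverse'' is not something they license, and the inverse construction $\Phi$ is precisely where all the content lies. (ii) Even granting that the MC element deforms the cdga $\Gamma\big(X,\mathrm{Sym}(\T_X[-n])\big)$, you must show the resulting formal stack $Y_\pi$ is geometric with perfect cotangent complex, that the deformed $2$-form is nondegenerate on $Y_\pi$ (note the nondegeneracy required is of $\omega$ on $Y_\pi$, not of $\pi$ on $X$, and the conjecture allows arbitrary, possibly degenerate, $(n-1)$-shifted Poisson structures on $X$), and that $f_\pi$ carries a nondegenerate isotropic structure. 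None of this is argued.

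Beyond that, establishing an equivalence of \emph{spaces} requires more than exhibiting the two composites as ``essentially tautological'' on objects: you need the composites to be homotopic to the identities as maps of $\infty$-groupoids, with all the $\Gm$-equivariant and filtered coherences tracked, and your sketch does not address this level of structure at all. The most promising non-conjectural route the paper itself points to is the Melani--Safronov equivalence between $n$-shifted Lagrangian structures and non-degenerate $n$-shifted coisotropic structures \cite{MS}, combined with a formal-completion argument to remove the non-degeneracy condition on the coisotropic side; reformulating your strategy through that comparison would at least anchor the construction of $\Phi$ in an established theorem rather than in the reversal of unproven conjectures.
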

\begin{remark}
Note that Valerio Melani and Pavel Safronov show in \cite[Section 6]{MS} that there is an equivalence between $n$-shifted Lagrangian structures on $f$ and non-degenerate $n$-shifted coisotropic structures on $f$. 
Moreover, there is a forgetful map from $n$-shifted coisotropic structures on $f$ to $(n-1)$-shifted Poisson structures on $X$. 
Therefore the work of Melani--Safronov \cite{MS} already provides a (non-conjectural) way to produce a map from $n$-shifted Lagrangian morphisms $f:X\to Y$ to $(n-1)$-shifted Poisson structures $\pi$ on $X$, without assuming any conjecture. The main advantage of the approach that we propose here is that one can make sense of everything for formal completions as well in a rather obvious way. 
\end{remark}

\end{document}